 \long\def\/*#1*/{}
 \newtheorem{claim}{Claim}
\newtheorem{theorem}{Theorem}
\newtheorem{lemma}{Lemma}
\newtheorem{remark}{Remark}
\newenvironment{proof}[2]{\noindent {\nopagebreak \bf Proof #1 #2.}\par}{\normalsize\noindent}
\newenvironment{definition}{\par\noindent{\bf Definition.\enskip}\ignorespaces}{\unskip\qquad \par\medskip}
\newenvironment{assumption}{\par\noindent{\bf Assumption.\enskip}\ignorespaces}{\unskip\qquad \par\medskip}
\author{Alexey Savvateev\thanks{ Dmitri Pozarskiy University, MIPT and NES CSDSI, hibiny@mail.ru.}
\and Constantine  Sorokin\thanks{NRU Higher School of Economics, and NES CSDSI, csorokin@hse.ru.} \and Shlomo Weber\thanks{Southern Methodist University, and NES, sweber@mail.smu.edu.}}
\title{Multidimensional free-mobility
	equilibrium: Tiebout revisited}
\begin{document}

\maketitle
\begin{abstract}
The paper provides consistent mathematical framework for seminal
Tiebout free-mobility model (1956). Our setting supports continuum of
consumers with multidimensional preferences and finite number of
strategic public good providers. We accommodate the most general
assumptions: providers' production function may have variable returns
to scale, our framework is rich enough to incorporate possible
externalities, spillovers, scale economies, network effects, etc.;
consumers utility may depend on choice of other agents in almost
arbitrary way. We focus on equilibrium existence, however the
questions of efficiency and stability are not left behind. The model can also be applied in several related fields, most notably in political economy.

\smallskip
\noindent \textbf{Keywords.} Tiebout sorting, local public goods, general equilibrium, group formation, social interactions.

\smallskip
\noindent \textbf{JEL Classification Numbers:} D71, H20, H73.
\end{abstract}

\pagebreak

\section{Introduction}

Rigorous and consistent modeling of local public good economies has long been a serious challenge for economists. Tiebout's (1956) early conceptual thoughts still aren't structured into a fully fledged general equilibrium framework that can rival the Arrow-Debreu model for private good economies and can be easily scaled from classroom Edgeworth box  tricks to  computational general equilibrium models of national economies. This paper aims at overcoming this issue.

It's rather straightforward how to write down a local public good economy (LPGE) model that allows free-mobility of agents, much harder is to provide its consistent solution. There are three most prominent difficulties in modeling LPGE as compared to classical private goods approach. First, on demand side, in LPGE we cannot ignore externalities that agent's choice imposes on everyone around him --- having good neighbors often turns to be crucial in residence choice; so the structure of agents' preferences has to be much richer than in the private goods economies. Second, on supply side, the public good production technology just can't be imagined as having exclusively decreasing or constant returns to scale --- the cities are never built for one resident, but for many; so productions sets are non-convex. Finally, the predictions of cooperative and non-cooperative theories don't coincide\footnote{This is the only non-obvious observation here, however, there is excessive literature clarifying this issue, see, for example, discussion in Scotchmer (2002).}
 for LPGE as they do for Arrow-Debreu model, where competitive equilibrium can be approximated with the economy's core; so even the right solution concept is unclear.   

We provide a model which easily tackles the first two problems: agents can have almost arbitrary preferences concerning the aggregate actions of others and public good production technology can have various returns to scale. We prove the \emph{existence} of a non-cooperative (Nash, or, better, \emph{Tiebout-Nash}) equilibrium, so that each agent makes his best choice given what the others are doing. This result is in sharp contrast with earlier findings of Dreze et al. (2008), which state that the LPGE core \emph{has to be empty} once we allow for multidimensional preferences. These two results combined clearly give the upper hand to the non-cooperative approach, thus resolving the third difficulty from the former paragraph. Our approach is related to Aumann's work (1964), where an assumption of a continuum of agents helps to attain much sharper results on competitive equilibrium.

We see our main contribution in separating the issues of equilibrium existence and equilibrium efficiency in LPGE. We prove existence under much broader assumptions than allow for efficiency, thus we identify the two potential causes of welfare losses: externalities and wrong number of public good providers on market. Firstly, without externalities we do prove the welfare theorem, however, with sufficiently strong externalities it is very easy to construct something like the tragedy of commons.\footnote{This problem is well documented, see \cite{Bewley} for details.} As for the second issue, we prove that the equilibrium exists for \emph{any} number of active public good providers on market, and nowadays it seems very naive to assume\footnote{For example, if the public good provision market has free entry, then why at the optimal community size the profits of public good provider should be zero?} that the equilibrium size of the community equals its socially optimal size.\footnote{
Tiebout \cite{Tiebout} actually did that in a most straightforward way by stating the following two assumptions: 
\emph{(a)} For every pattern of community services set by, say, a city manager who
follows the preferences of the older residents
of the community, there is an optimal
community size. This optimum is
defined in terms of the number of residents
for which this bundle of services
can be produced at the lowest average
cost.
\emph{(b)} Communities
below the optimum size seek to attract
new residents to lower average
costs. Those above optimum size do just
the opposite. Those at an optimum try to
keep their populations constant.
} So we have to conclude that the question of whether the free mobility improves welfare\footnote{Let's agree to call it \emph{Tiebout hypothesis}.}  and to what extent cannot be answered in a general theoretical model with realistic assumptions,\footnote{The famous critique by T. Bewley (1981) puts forth the same ideas: ``If no more is true, then Tiebout's ideas lead not to a general theory but to a possibility which requires empirical verification.'' } and indeed, this question recently attracted a lot of empirical attention, Ferreyra (2007) and Banzhaf \& Walsh (2008)  are notable examples of these efforts, they have numerous and impactful successors. Our work provides a unifying theoretical background for such models.

The issues of potential inefficiency and equilibrium multiplicity leads us to the questions of equilibrium stability. The results we obtain are  striking: under very mild assumptions every equilibrium is stable with respect to deviations of small groups of similar agents. If we allow for deviations of arbitrary (i.e. \emph{diverse}) small groups, then the stability conditions become nontrivial. This at first site technical result has important welfare implications: if the optimal community size changes (for example, due to technological progress), then the old inefficient configuration may be persistent as it takes either large or diverse group to create a snowball effect, both possibilities might involve huge coordination problems.    

Local public good interpretation is in the focus of our paper. However, the same mathematical approach can be applied to some very different setups, including spatial political competition, club formation and even clustering.

To summarize, our main message can be summarized just in one line: We've found the right framework, forget equilibrium existence problems in LPGE.

\subsection*{Literature Review}

Since the 50-s this topic was revisited many times, some recent literature survey can be found in Scotchmer (2002).  Probably the first attempts to provide a consistent mathematical framework for Tiebout choice model were done by Westhoff (1977) and Greenberg and Weber (1985). The latter used cooperative game theory approach to formalise their equilibrium concept, while the former relied on individual choice. Despite very fruitful contribution to unidimensional cooperative framework by Alesina and Spolaore (1997), in subsequent research Dreze et al. (2008) have shown that the core is always empty even in the simplest 2-dimensional setup. Other important positive contributions in non-cooperative framework include a papers by Bewley (1981) and Ellickson et al. (1999), however they do not allow for endogenous community characteristics, and thus are very limited compared to our setup.  Musatov et al. (2014) have proven noncooperative equilibrium existence for a very broad one-dimensional class of models; but their results rely on some additional monotonicity assumptions. We elaborate the same noncooperative approach and utilise similar fixed-point techniques. However, one of our main contributions is exactly the departure from unidimensional world, as it allowed to establish equilibrium existence under very mild and most natural assumptions --- at least as compared to the previous literature.

\subsection*{Model Description and Results Preview}

We consider a model with a continuum of agents and a finite number of public good providers: each agent has to chose only one provider; for example, suppose that a provider runs a neighborhood or a town, where an agent can choose to reside.\footnote{We can allow for a technical ``provider'' that stands for not making a choice and living alone in the wild.} 
Agent's preferences can be fully described by his \emph{type}, which is a vector of both discrete (gender, ethnicity, education etc.) and real-valued variables (age, redistribution preferences, etc.). There is a distribution over agent's types, it should be non-atomic in real-valued components. 

Each community has a number of endogenous \emph{characteristics} that depend continuously on the community composition; the most obvious one is community's size, others may include ethnic diversity or the median of redistribution preferences --- anything goes. To get better intuition of what community characteristics are the reader might imagine themselves considering a job offer from another city --- any aggregate measure (welfare, health-care, crime, ecology, etc) important for that decision is a community characteristic.\footnote{However, as agents compose a continuum, one cannot condition their utility on a fact that another particular person lives in that community. This is a serious limitation, but a necessary one in a continuous framework.}

The public good providers may adjust public good provision \emph{parameters} maximizing their own utility or community's welfare. The agent's utility is integrable with respect to his type and  depends continuously on characteristics and public good provision parameter of all communities --- agents might take into account what's going on in neighboring communities, thus our model allows for community spillovers, network effects, trade, etc. Note that we do not need any concavity ar monotonicity assumptions on the individual agent's utility,\footnote{Provider's utility still has to be quasi-concave with respect to parameters of his choice, as there are finite number of providers and we are looking for a Nash equilibrium.} continuity is enough. Also note that, unlike our predecessors (Ellickson et. al (1999)) we do not have to model the private good economy\footnote{If needed, one can easily amend our model using the Aumann's (1964) approach.} on top of a local public good one; with continuum of agents we can balance things out with continuous group sizes and do not need divisible private goods for that.

To obtain the equilibrium existence result we basically need one non-trivial assumption: \emph{agents should have strict preferences}.  It is  implying that for every pair of communities, every possible community characteristics and public good provision parameters the set of indifferent agents is small (i.e. has zero measure). We show that this property holds generically in our setup.

\begin{wrapfigure}{R}{0.5\textwidth}
   	\includegraphics[scale=0.6,trim =  30mm 120mm 20mm 30mm]{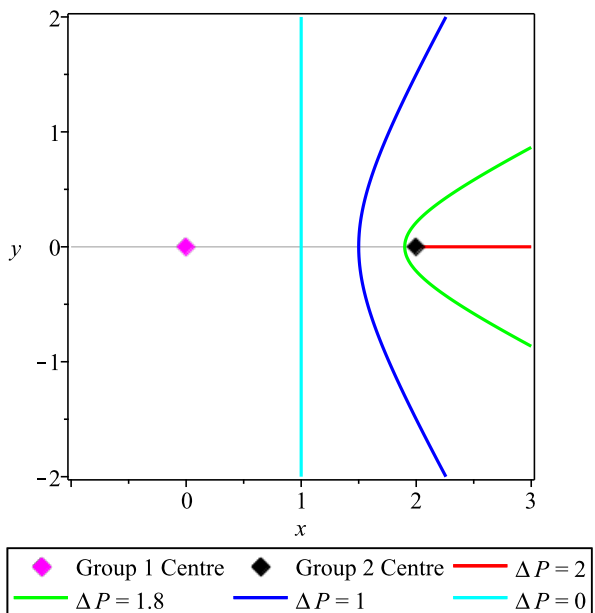} 
	\caption{Sets of indifferent agents given the difference of prices. Note the horizontal line at $\Delta P=2$.}
\end{wrapfigure}

 We will also refer to this assumption as a \emph{hyperbola property}. Its name comes out of a very simple model, with its help we will demonstrate what exactly goes wrong if this property fails. Consider agents having two-dimensional Euclidean  metric preferences: agent's cost is proportional to the distance from their ideal point and that of the community (centre).  If providers charge constant prices for joining communities and agents have quasilinear utility, then the set of indifferent agents would lie exactly on a hyperbola: the difference of prices should equal the difference of distances, and a hyperbola is exactly the locus of points where the absolute value of the difference of the distances to the two foci is a constant. Hyperbola is a small subset of a 2-dimensional space, but note that it is not the case in the  unidimensional world, where hyperbola could be a positive-measure interval. On the figure to the right it happens when difference in prices exactly equals the difference between community centres; otherwise, the set of indifferent agents is a single point or empty.

So the case of metric preferences on a real line is  a knife-edge situation where the swings of large groups of indifferent agents create discontinuities that undermine the equilibrium existence. Unfortunately, exactly this story was a starting point for too many earlier attempts to prove a general existence theorem, see Musatov et al. (2016) for details.\footnote{The strict preferences condition was used in a related field of matching theory by Azevedo and Leshno (2013). Their results allow us to extend out theory (almost ``as is'') to the case when the communities have strict caps on their sizes and Gale-Shapley algorithm is used to allocate agents to communities.}

If we go further and make one more assumption --- \emph{small group inefficiency} then the equilibrium with all the communities being non-empty also exists. Basically this condition implies that  the expenses of all the public good users go to infinity as the size of their group goes to zero.  For example this is the case when there is some small fixed cost that has to be shared among public good users. This conditions also has a clear increasing returns to scale interpretation --- however we impose no further restrictions on agent costs for larger groups. Note that this assumption perfectly fits the local public goods story --- living alone in an abandoned city is certainly a bad option.

This result is even more important than the former one because equilibrium existence is obvious once small group are inefficient --- just pile everyone into one big city, whatever the congestion costs are, they are still less than infinite costs of starting a new city. This is why the existence of equilibrium with non-empty groups is nontrivial and can't be obtained by a straightforward application of a fixed-point theorem; we rely on a refined set-intersection results to obtain the proof.

Our work parallels competitive equilibrium theory as agents and providers are ``price-takers'' --- their individual decisions do not change group sizes or any other partition characteristics. The equilibrium is also Pareto-efficient with some rather similar conditions\footnote{In the general case efficiency may fail due to externalities and interaction effects, but note that the first welfare theorem also fails if consumers care not only about their own consumption.} --- conditional on the exact number of non-empty communities. But, us we already mentioned, the number of active communities need not be socially optimal. For example, if running a community requires some fixed cost and there is no side options for the agents, then the equilibrium expenses of all agents may be arbitrary high just due to the large amount of communities and proportionally large amount of fixed costs paid.\footnote{Samuelson and Musgrave were afraid of a centralized public good under-provision due to free-riding, but in LPGE we may end up with public good over-provision due to excessive fragmentation of communities. And even private goods economy might have similar problems, as  Dixit and Stiglitz show for classical monopolistic competition model, where the free-entry condition leads to similar problems.}

To address the issue of ``equilibrium'' number of communities one needs to introduce either a model of market entry and exit process, or some form of collective decision making. First option is beyond the scope of this paper: the process of creating new communities is much more complicated than creating new firms. As for the second option --- the most natural way to introduce collective decision in our model is to allow deviations by some small (but positive measure!) groups of agents and to investigate equilibrium stability with respect to such deviations. 

It turns out that in multidimensional case under minor technical assumptions any equilibrium is weakly stable, i.e. there is no mutually profitable deviation by small $\epsilon$-ball of agents. However, the conditions for strong equilibrium stability --- no mutually profitable deviation by any small set of agents --- are much more demanding. So if equilibrium is weakly stable but not strongly  then coordinated deviation of either large group with similar preferences or small group with diverse preferences is needed to do away with the equilibrium. 

If we consider separable agent cost function including a component for distance in preference space (horizontal differentiation) and a component for a share of fixed cost (increasing returns to scale) then one important comparative statics result can be obtained. As distance costs decrease relative to fixed costs any possible nontrivial equilibria will sooner or later lose the strong stability property (if it had one). And in the limit case with zero distance costs only trivial equilibrium with just one big public good provider is stable. On the other hand, if there are no fixed costs then any equilibrium is stable. Note that since all the equilibria remain weakly stable the deviation of a group of agents with \emph{diverse} preferences is required to go from more smaller groups to fewer lager.  

This statement has important efficiency implication: suppose that at some moment public good provision was efficient, however, since then the distance costs decreased.\footnote{In geographical setting this might be due to falling transportation costs, in political setting this might be due to people becoming more tolerant.} Then to maintain efficient public good provision the number of communities should decrease, but as stated above, any equilibrium is weakly stable; strong stability failure allows to bring down an equilibrium by  a coordinated deviation of a set of agents with diverse preferences --- but it may be very hard to execute just because people with different preferences might just dislike each other. So it gives rise to another potential source of inefficiency in local public good provision.

We proceed by laying down a formal model, first its simpler version (Section 2), and next the extended case (Section 3). Next we will discuss welfare properties and equilibria stability (Section 4).

\section{Basic model}
We begin with laying down a simplified version of our model, useful to explain its core principle and derive several important comparative statics results. For now we allow only for continuous component's in agent's type, community characteristics restricted just to their sizes and no public good provision parameters. However spillovers and externalities between communities are possible. So, consider a population of economic agents distributed in $k$-dimensional vector space with a non-atomic compact-support density. We need to partition them into $n$ distinct communities such that no agent is willing to migrate. The agent's preferences are manifested in a cost function\footnote{Following our predecessors \cite{Last} we will use (wlog) agent costs instead of their utilities.} that specifies  expenses if they are willing to join any particular community. This cost function depends on community's index, agent's type and on the share of agents in each community. 

Formally we consider the following setup. Agent's preferences are fully characterized by $k$ real-valued parameters. These might be anything from his age to his redistribution preferences, and we will refer to them as agent's type. So let $x$ be a point in $\mathbb{R}^k$, $f(x)$ --- non-atomic density function with support $X$. We assume that $X$ is compact and has nonempty interior. Let $F(Y)$ be measure of set $Y\subset \mathbb{R}^k$ with respect to density $f$. Note that we neither need convexity of $X$ nor make any further assumptions about the properties of density, for example, it need not to be continuous.

Let $N=1..n$ be a finite list of all available communities. Let $c_i(x,m)$ be a function that stands for community $i$'s cost incurred to a person of type $x$ 
 should community sizes be $m$. This function combines both agent idiosyncratic preferences and communities' public good provision technology --- we do not have to treat them separately.\footnote{Once we will get to comparative statics we will separate them.}  Define $M=\{m|m_i>0,\,\sum_i m_i = 1\}$.
 We assume that $c_i$ is continuous in $m\in M$, measurable in $x$ with respect to $f$ and  bounded from above (a.e.) in $x\in X$ and $m_{-i}$ for any given $m_i\in (0,1)$, $m\in M$.

The  \emph{Tiebout-Nash equilibrium} is a partition of $X$ into $n$ communities, $I:X\rightarrow N$, such that each agent (each point in $X$) is as well off staying in his community as joining any other given the choice of all other agents. Note that it implies that before making their decisions all agents make conjectures about the actual community sizes $m$, and in equilibrium their conjectures prove to be correct. As in general equilibrium theory agents are ``price-takers,'' i.e. their individual decisions do not affect actual community sizes.

  The \emph{hyperbola property} (strict preferences) assumption:
\begin{assumption}\label{indif}
For every two distinct communities $i,j\in N$ and any community sizes $m\in M$ the set of indifferent agents has zero measure, i.e.:
$$
F(\{x|c_i(x,m)=c_j(x,m)\})=0.
$$
\end{assumption}

Despite holding generically, this hyperbola property is restrictive. For example, it is violated if $X$ is one-dimensional interval and $c$ is equal share tax plus distance to community's centre. We will consider this example in more detail later. 

Our next assumption emphasizes that the public good provision technology usually has increasing returns to scale. However, we restrict it only for small production scales, for example, once the community size is large enough we allow for congestion effects ti take over.

\begin{assumption}\label{small}
As size of a community goes to zero, the costs of its dwellers go to infinity uniformly in $x$:
 $$
 \forall A\in\mathbb{R}_+, \; \exists m_i^0\in\mathbb{R}_+:\forall m\in M:m_i<m_i^0,\;\forall x\in X \text { we have that }
 c_i(x,m)>A.
 $$ 
\end{assumption}
  In other words communities of very small size have very high per-person costs --- this is a well-known \emph{small-group ineffectiveness}. Note that this is a tricky assumption as it might  make the existence problem trivial --- indeed, if there is only one large group then no one would like to go into an empty one cause of infinite costs. So the right question is whether the nontrivial equilibrium exists. The following theorem addresses the issue.
  \begin{theorem}\label{main}
Under the assumptions above there exists an equilibrium such that all $N$ groups are non-empty.
 \end{theorem}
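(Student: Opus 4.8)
\medskip
\noindent\textbf{Proof proposal.} The plan is to phrase a Tiebout--Nash equilibrium as an \emph{interior} fixed point of a best-response (demand) map on the simplex of size profiles, and to locate such a fixed point by a Knaster--Kuratowski--Mazurkiewicz (KKM) set-intersection argument, with Assumption~\ref{small} doing the work of pushing the fixed point off the boundary. For a conjectured profile $m\in M$ send each agent $x$ to a community minimizing $c_i(x,m)$ over $i\in N$ and set $\varphi_i(m)=F(\{x:c_i(x,m)\le c_j(x,m)\ \text{for all }j\})$. By Assumption~\ref{indif} the set of agents with a non-unique minimizer is $F$-null, so almost every agent has a single best community and $\sum_i\varphi_i(m)=1$; hence $\varphi$ maps $M$ into the closed simplex $\bar M=\{m:m_i\ge 0,\ \sum_i m_i=1\}$. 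Any $m^\ast$ with $\varphi(m^\ast)=m^\ast$ and $m^\ast_i>0$ for all $i$ gives the theorem: let $I(x)$ be the (a.e.\ unique) cost-minimizing community at $m^\ast$, breaking residual ties arbitrarily; then $F(I^{-1}(i))=\varphi_i(m^\ast)=m^\ast_i>0$, the conjectures $m^\ast$ are self-consistent, and no agent is better off in another community. A routine dominated-convergence argument shows that $\varphi$ is continuous on every compact subset of the open simplex $M$: if $m^{(t)}\to m$ in $M$, Assumption~\ref{indif} gives a strictly best community $j^\ast(x)$ at $m$ for $F$-a.e.\ $x$, continuity of $c_i(x,\cdot)$ makes $j^\ast(x)$ strictly best at $m^{(t)}$ for large $t$, the relevant indicators converge a.e., and $\varphi_i(m^{(t)})\to\varphi_i(m)$.

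The crucial step is a boundary estimate powered by Assumption~\ref{small}: there is $\delta\in(0,1/n)$ such that $m\in\bar M$ and $m_i\le\delta$ imply $\varphi_i(m)=0$. To see it, note that some community always has size at least $1/n$; pick a level $A$ above an upper bound for $c_j(x,\cdot)$ valid for all $x$ and all profiles in which $m_j\ge 1/(2n)$ (here one uses the boundedness of the cost functions, in its natural locally-uniform-in-$m_j$ reading), and invoke Assumption~\ref{small} to choose $\delta$ so small that $m_i\le\delta$ forces $c_i(x,m)>A$ for every $x$. If $m_i\le\delta$, the largest of the remaining sizes is at least $(1-\delta)/(n-1)\ge 1/(2n)$, so that community has cost below $A<c_i(x,m)$ for every $x$; hence no agent chooses community $i$ and $\varphi_i(m)=0$. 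In words: a nearly empty community is prohibitively expensive, so nobody joins it.

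Now I would run the KKM argument on the sub-simplex $\Delta_\delta=\{m\in\bar M:m_i\ge\delta\ \text{for all }i\}$, whose vertices are $v_i=\delta\mathbf 1+(1-n\delta)e_i$ (writing $\mathbf 1=(1,\dots,1)$ and $e_i$ for the unit vectors), using the sets $A_i=\{m\in\Delta_\delta:\varphi_i(m)\ge m_i\}$. Each $A_i$ is closed since $\varphi$ is continuous on $\Delta_\delta$; $v_i\in A_i$ because the boundary estimate gives $\varphi_j(v_i)=0$ for $j\ne i$, hence $\varphi_i(v_i)=1\ge(v_i)_i$; and for any face $\mathrm{conv}\{v_j:j\in S\}$ and any $m$ on it one has $m_j=\delta$ for $j\notin S$, so the boundary estimate yields $\varphi_j(m)=0$ for $j\notin S$, whence $\sum_{j\in S}\varphi_j(m)=1\ge\sum_{j\in S}m_j$ and some $j\in S$ satisfies $\varphi_j(m)\ge m_j$, i.e.\ $m\in A_j$. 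The KKM lemma therefore produces $m^\ast\in\bigcap_i A_i$; summing $\varphi_i(m^\ast)\ge m^\ast_i$ over $i\in N$ forces every inequality to be an equality, so $m^\ast=\varphi(m^\ast)$ with $m^\ast_i\ge\delta>0$ for all $i$, which is exactly the required equilibrium with all communities non-empty.

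I expect the main obstacle to be precisely what the introduction flags. A bare application of Brouwer, or of KKM with the untruncated sets $\{\varphi_i(m)\ge m_i\}$ on all of $\bar M$, does produce a fixed point, but it may sit on $\partial\bar M$ --- the ``pile everyone into one town'' equilibria --- because the boundary faces of $\bar M$ are invariant under the demand map ($\varphi_i\to 0$ as $m_i\to 0$), so the trivial equilibria persist and cannot be dislodged by a generic argument. The content is thus to force interiority, and the two devices that achieve this --- the boundary estimate and the fact that the KKM covering condition \emph{survives} the restriction to $\Delta_\delta$ (precisely because the faces of $\Delta_\delta$ lie in the region where the boundary estimate applies) --- both rely on small-group ineffectiveness. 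A subsidiary technical point to watch is the uniformity of the cost bound used in the boundary estimate, which I would either read into the stated boundedness hypothesis or derive from it.
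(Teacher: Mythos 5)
Your proposal is correct and follows essentially the same route as the paper's own proof: the ``real size'' map $\varphi$ is the paper's $f_i(m)=F(S_i(m))$, continuity via dominated convergence plus the hyperbola property is the paper's Claim~\ref{Cont}, the boundary estimate $m_i\le\delta\Rightarrow\varphi_i(m)=0$ is the paper's Claim~\ref{Zero}, and the KKM covering of the $\epsilon$-truncated simplex by the closed sets $A_i=\{\varphi_i(m)\ge m_i\}$ is exactly the paper's concluding argument. If anything, your verification of the KKM face condition on the truncated simplex and your remark on the locally-uniform reading of the cost bound are spelled out slightly more explicitly than in the paper.
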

 
 There are two main intuitions behind this result. First, the hyperbola property guarantees continuity that earlier one-dimensional models lacked. It is so simple and elegant that we really were surprised to find out that it is actually sufficient for equilibrium existence. Even more, in two dimensional case this condition is likely to hold, as compared to the unidimensional preferences. Second, the small group inefficiency allows to balance the groups such that each on is non-empty. For example, if one group is  more beneficial to agents then the other one (other things being equal), then in equilibrium it is going to be small enough to offset this effect. 
 
 Another interpretation of the theorem is that there exists a partition such that there is no snow-ball effect when one successful group incorporates the other one in a landsliding migration. However, this equilibrium may be non-stable --- the next section investigates this issue.

 \section{Extended model}
 
 This section provides the existence result under most general assumptions. We took a lot of insight from empirical works and our idea is to incorporate everything that matters in a real world into a single model. 
 
 Now suppose that there are $q$ sets of types of agents,\footnote{Which might correspond to different ethnic groups, people of different sex and so on. Any discrete characteristic spawns a finite number of sets of types. } each has its own density $f_j(x_j)$ with compact support $X_j\subset \mathbb{R}^{k_j}$. Denote by $f$ the product density and by $X$ --- Cartesian product of supports. Let $F_j(Y_j)$ be measure of set $Y_j\subset X_j$ with respect to density $f_j$ and $F(Y)=\sum_{j=1}^q F_j(Y_j)$, $Y=Y_1\times..\times Y_r\subset X$.
 
 Let there be  $l_i$ ($L_i=1..l_i$) characteristics $\{v^i_l\}_{l\in L_i}$ of community $i$ on which agents' utility may depend should she join this community. These parameters can stand for almost anything from community's total transportation cost to the measure of community's ethnic diversity. As a special (and necessary) case of parameters above we take a share of each agent type (other than type 1) in each community. This way $m_i$ will be total size of community $i$ and the share of type 1 agents can be calculated by subtracting from $m_i$ the total share of all other types.

 Suppose that there for each community $(i\in N)$ there are $r_i$ parameters $z^i$ that public good providers are free to set the way they like. These can be public good provision level, facility locations, fees, etc.  Each provider seeks to maximise his (or community's) utility function $u_i(m,v,z)$ by choosing $z_i$. We assume that $u_i$ is continuous in all variables.

 Each agent seeks to chose community that fits his preferences best. Let $c^j_i(x,m,v,z)$ be a function that stands for community $i$'s cost incurred to a person of type $j$ located at $x$, should community sizes be $m$, communities' characteristics --- $v$, and public good provision parameters --- $z$. We assume that $c^j_i$  is measurable in $x$, continuous in all other variables ($m\in M$) and uniformly bounded from above in $x\in X$, $m_{-i}$, $v$, $z$ for any given $m_i\in (0,1)$, $m\in M$. As in the first section, we do not have to separate here public good provision technology from agent's idiosyncratic preferences.
 
 After each agent had chosen his community we obtain a partition $I:x\rightarrow N$ represented by an indicator function. With partition we can calculate community sizes $m$ and community characteristics $v(I,z)$. We assume that community characteristics depend continuously on partition with respect to symmetric difference measure  metric and that they depend continuously  on public good provision parameters as well. For example, this would be true if 
 $$
 v^i_l = \int\limits_{x\in X} H^i_l(I(x),x)f(x)dx,
 $$  
 where $H_l$ is just some (Lebesgue) integrable function in $x$.
 
 Now we need to restate assumptions appropriately:
 
 \begin{assumption}\label{indif-general}
 	For every two distinct communities $i_1,i_2\in N$, any possible  community sizes $m$, community characteristics $v$ and public good provision parameters --- $z$ the total measure of sets of indifferent agents across types has zero measure, i.e.:
 	$$
 	\sum_{j=1}^q F_j(\{x|c^j_{i_1}(x,m,v,z)=c^j_{i_2}(x,m,v,z)\})=0.
 	$$
 \end{assumption}
 The small group inefficiency assumption remains almost as is:
 
 \begin{assumption}\label{small-general}
 	As size of community goes to zero its costs go to infinity:
 	$$
 	c^j_{i}(x,m_i,m_{-i},v,z)\rightarrow +\infty \text{ as } m_i\xrightarrow[]{} 0.
 	$$ 
 \end{assumption}
 
 In addition we need an assumption on the structure of set of possible community characteristics:
 \begin{assumption}\label{structure1} 
 	The set of all possible community characteristics $V$ is homeomorphic to $l=\sum_{i=1}^n l_i$ dimensional cube.\footnote{This assumption can be relaxed using sharper topological notions, but at a cost of exposition.} From now on we will assume that this set is actually a unit cube.
 \end{assumption}
 
 And also a standard assumption about the structure of a public good providers game:
 \begin{assumption}\label{structure2}  
 	For each community,  the set of all possible public good provision parameters $Z_i$ is compact and convex. Without generality loss we will take it as $r=\sum_{i=1}^n r_i$ dimensional unit cube. The choice of provider may be restricted to some nonempty compact convex subset $\bar Z_i(m,v)\subset Z_i$ given $m$ and $v$. We will assume that  $\bar Z_i(m,v)$has closed graph in $(m,v)$. Providers' utility functions are quasi-concave with respect to parameters of their choice.  
 \end{assumption}
 
 A partition and a set of public good provision parameters is \emph{Tiebout equilibrium} if no agent is willing to change his community given community sizes, characteristics and public good provision parameters; no provider is willing to modify his parameters given his community. 
 
 The following theorem resembles our first theorem (\ref{main}), however it is much more general:
 
 \begin{theorem}\label{mainmain}
 	Under the assumptions above there exists an equilibrium such that all $N$ groups are non-empty.
 \end{theorem}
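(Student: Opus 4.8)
The plan is to realise a Tiebout equilibrium as a coincidence point of the aggregate best-response map on the compact space of conjectures $(m,v,z)\in\bar M\times V\times Z$ (with $V,Z$ cubes by Assumptions~\ref{structure1} and~\ref{structure2}): Assumption~\ref{indif-general} will make the agents' side a continuous \emph{function}, Assumption~\ref{structure2} will make the providers' side a well-behaved correspondence, and Assumption~\ref{small-general} will confine the coincidence point to the interior of the size simplex. The basic model (Theorem~\ref{main}) is the special case with no $v,z$, proved the same way.

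First I would describe the response to a fixed conjecture $(m,v,z)$. On the agents' side, by Assumption~\ref{indif-general} the set of agents of any type indifferent between two communities is null, so the rule ``join the cheapest community'' defines, off a null set, a unique partition $I(m,v,z)$, hence realised sizes $\hat m(m,v,z)$ and realised characteristics $\hat v(m,v,z)=v(I(m,v,z),z)$. The key lemma is that $\hat m$ and $\hat v$ are continuous in $(m,v,z)$: along $(m^t,v^t,z^t)\to(m,v,z)$ the differences $c^j_{i_1}(\cdot)-c^j_{i_2}(\cdot)$ converge pointwise and the limit vanishes on a null set, so the cheapest-community regions converge in measure, i.e. the partitions converge in the symmetric-difference metric; continuity of $\hat m$ is then immediate, and continuity of $\hat v$ follows from the assumed continuity of the characteristic maps in that metric and in $z$. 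On the providers' side, for fixed $(m,v)$ each problem $\max_{z_i\in\bar Z_i(m,v)}u_i(m,v,z_i,z_{-i})$ has a nonempty solution set (Weierstrass), convex by quasi-concavity of $u_i$, and $BR_i:(m,v,z_{-i})\rightrightarrows Z_i$ has closed graph by Berge's theorem together with the closed-graph hypothesis on $\bar Z_i$; so $(m,v,z)\mapsto\prod_i BR_i(m,v,z_{-i})$ is upper hemicontinuous with nonempty convex values.

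The obstruction to a naive fixed-point argument is that $M$ is the \emph{open} simplex and the costs blow up on its faces, so the ``everyone in one city'' configuration is a spurious equilibrium of any self-map of $\bar M\times V\times Z$. Here Assumption~\ref{small-general}, in its form uniform in $x,m_{-i},v,z$, provides thresholds $m^0_i>0$ such that whenever $m_i\le m^0_i$ community $i$ is strictly costlier, for every agent, than some community whose size is bounded away from $0$ (and hence of bounded cost), so that $\hat m_i(m,v,z)=0$ there, uniformly in $(v,z)$. Let $M^0=\{m\in\bar M:m_i\ge m^0_i\ \forall i\}$, a compact convex body affinely isomorphic to $\Delta^{n-1}$. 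On the face $\{m_i=m^0_i\}$ the $m$-component of the response satisfies $(\hat m-m)_i=-m^0_i<0$, i.e. it points strictly \emph{out} of $M^0$, whereas on $\partial V$ and $\partial Z$ the corresponding components retain the usual inward (self-map) behaviour. I would then invoke a Kakutani-type coincidence theorem with these mixed boundary conditions --- equivalently: approximate $\prod_i BR_i$ by continuous selections (Cellina), run a Poincar\'e--Miranda/Brouwer degree argument on $M^0\times V\times Z$ to get, for each approximation scale, a point where the perturbed response equals the conjecture with $m$-component in $\operatorname{int}M^0$, and pass to a convergent subsequence using upper hemicontinuity of $\prod_i BR_i$. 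At the limit $(m^*,v^*,z^*)$ one gets $\hat m(m^*,v^*,z^*)=m^*$, $\hat v(m^*,v^*,z^*)=v^*$ and $z^*_i\in BR_i(m^*,v^*,z^*_{-i})$ for all $i$, and necessarily $m^*\in\operatorname{int}M^0$: if $m^*_i=m^0_i$ then $\hat m_i(m^*,v^*,z^*)=0\ne m^*_i$, a contradiction. Reading this off, $I(m^*,v^*,z^*)$ together with $z^*$ is a Tiebout equilibrium in which every community has size $\ge m^0_i>0$.

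I expect the main obstacle to be exactly this last step: a plain Kakutani fixed point on $M^0\times V\times Z$ need not exist (the map is not a self-map in the $m$-coordinates, since $\hat m$ can leave $M^0$), while a plain Kakutani fixed point on $\bar M\times V\times Z$ need not be interior (it can be the trivial one-city equilibrium), so the argument genuinely needs a refined set-intersection / degree statement marrying an outward-pointing-field condition in the size coordinates with Kakutani-type behaviour in the rest. A secondary difficulty is the continuity lemma for $\hat m$ and $\hat v$ --- making the convergence in measure of the cheapest-community regions precise, and checking that it and the threshold construction behave correctly near the faces of the simplex, where some communities are effectively closed --- together with the routine upgrade of Assumption~\ref{small-general} to the uniform form used above.
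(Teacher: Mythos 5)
Your proposal is correct in outline and its skeleton coincides with the paper's: the same best-response map on conjectures $(m,v,z)$, continuity of the realised sizes and characteristics via Assumption~\ref{indif-general} and convergence of the cheapest-community regions (the paper's Claim~3, proved by dominated convergence), a Kakutani-type correspondence for the providers, and Assumption~\ref{small-general} used to force $\hat m_i=0$ on a small-size face so that the coincidence point must be interior (the paper's Claim~4). Where you diverge is the final topological step, which you correctly identify as the crux. The paper stays combinatorial: it reindexes $M_e\times V\times Z$ as a simplotope (one $(n-1)$-simplex times $l+r$ one-dimensional simplexes), defines closed covering sets $A^j_k$ by a ``maximal excess'' criterion $\overline{f}^j_k(t)-t^j_k\geq \overline{f}^p_q(t)-t^p_q$ (taking a selection $\overline{f}=\min f$ of the provider correspondence) augmented with the boundary faces, and applies Freund's (1986) dual covering lemma --- the simplotope generalisation of Scarf's dual KKM --- to produce the intersection point directly. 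You instead propose the analytic counterpart: Cellina approximate selections of $\prod_i BR_i$, a Poincar\'e--Miranda/degree argument on $M^0\times V\times Z$ exploiting the outward-pointing displacement in the size coordinates and the self-map behaviour in $v,z$, and a limit via upper hemicontinuity. These are equivalent in power (the mixed-boundary degree statement you need \emph{is} Freund's lemma in analytic clothing), and neither route is fully discharged without proving or citing that refined intersection result; your version arguably handles the provider correspondence more carefully, since approximating by continuous selections and passing to the limit avoids the paper's reliance on closedness of sets built from the pointwise minimum of a best-response correspondence, which is not obviously continuous. The two secondary issues you flag --- convergence in measure of the choice regions and the uniform upgrade of Assumption~\ref{small-general} --- are exactly the points the paper also treats briefly (Claims~3 and~4), so your proposal is at a comparable level of completeness.
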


 \section{Efficiency and Stability}
 
 We begin this section by laying down a benchmark example that illustrates the key trade-off of this model in the simplest way. Next, we will return to  our model from section 2 and state some general results for it.
 
  Suppose that there is two dimensional preference space and some density with compact convex support $X$ (for example, unit circle). Any $x\in X$ is an ideal point for some consumers, their numbers are represented by a density function $f(x)$. There are $N=1..n$ public good providers, each has his own distinct ``program'' point $x^i$, which is constant. The Euclidean distance between public good provision ``program'' and consumer's ideal point is his idiosyncratic disutility from using that particular public good. The public good production technology has zero marginal cost and constant fixed cost $g$. The cost is distributed equally among everyone using that public good. So, in this case we have 
 $$
 c_i(x,m_i)=||x-x_i||+\frac{g}{m_i}.
 $$  
Note that in this case all the key assumptions are satisfied as the set of indifferent consumers is always a hyperbola --- a zero-measure curve in two-dimensional space, and small groups are inefficient because every consumer bears equal fixed cost share. The existence theorem is non-trivial because there always is an equilibrium with just one nonempty group.  
 
Stability analyses heavily relies on our ability to differentiate the cost functions, so we have to make additional assumption regarding the model primitives. However, the example above would help to keep things simple, as all the ideas below perfectly apply to it.
 
For a general model of this section, we will assume that $k=dim(X)$ is two or greater, $X$ is a convex set, function $c_i(x,m)$ does not depend on $m_{-i}$, has continuous, negative and bounded away from zero derivative in $m_i$ for all $m_i\in (0,1)$; in addition, these functions should  have continuous derivative in $x$ for all possible equilibrium indifference points.\footnote{Note that in case of distance costs in the example above center position is not a possible equilibrium indifference point --- otherwise that group would have zero size.} We will also assume that for all such points (indifference between groups $i$ and $j$)
 $$
\left|\left| \frac{\partial c_j}{\partial x}-\frac{\partial c_i}{\partial x}\right|\right| > const>0.
 $$
 For strictly convex metric distance costs and equal share participation (as in the example above) these assumptions are satisfied. 
 
 First we are going to establish an analog of the first welfare theorem. The key assumption we need is that the cost function is \emph{separable}, i.e. $c(x,m)=c^d(x)+c^s(m)$ --- exactly as the one in the example above.
  \begin{theorem}\label{Pareto}
 If agent's cost function is separable, then every equilibrium is Pareto optimal given the number of non-empty groups.
 \end{theorem}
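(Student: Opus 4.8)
The plan is to run the classical first-welfare-theorem argument, reading the size-dependent part of the cost as a price that agents take parametrically. Write the separable cost as $c_i(x,m)=c^d_i(x)+c^s_i(m_i)$ (under the standing assumptions of this section $c_i$ does not depend on $m_{-i}$), and at the equilibrium size vector $m$ set $p_i:=c^s_i(m_i)$. The equilibrium condition then becomes a pure revealed-preference statement: for almost every $x$ the assigned community $I(x)$ minimises $c^d_j(x)+p_j$ over $j\in N$, so $c^d_{I(x)}(x)+p_{I(x)}\le c^d_j(x)+p_j$ for every $j$ --- in particular for the $j$ that a competing partition assigns to $x$.

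Suppose, toward a contradiction, that a feasible partition $I'$ with size vector $m'$ and the same set $S$ of non-empty communities Pareto-dominates $I$: for almost every $x$ one has $c^d_{I'(x)}(x)+c^s_{I'(x)}(m'_{I'(x)})\le c^d_{I(x)}(x)+c^s_{I(x)}(m_{I(x)})$, with strict inequality on a set $B$ of positive $F$-measure. Fix $i\in S$ and take $x$ in the full-measure subset of $(I')^{-1}(i)$ on which both inequalities hold. The left-hand side of the Pareto inequality at such $x$ is $c^d_i(x)+c^s_i(m'_i)$, while the equilibrium inequality with $j=i$ gives $c^d_{I(x)}(x)+c^s_{I(x)}(m_{I(x)})\le c^d_i(x)+c^s_i(m_i)$; chaining these and cancelling $c^d_i(x)$ yields $c^s_i(m'_i)\le c^s_i(m_i)$. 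Since $c^s_i$ is strictly decreasing in $m_i$ (its derivative is negative, and in fact bounded away from zero, by the standing assumptions), this forces $m'_i\ge m_i$ for every $i\in S$. Because $F(B)>0$ and $N$ is finite, $B$ overlaps some $(I')^{-1}(i^*)$ in positive measure, and there the Pareto inequality is strict, so the same cancellation gives $c^s_{i^*}(m'_{i^*})<c^s_{i^*}(m_{i^*})$, hence $m'_{i^*}>m_{i^*}$.

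Summing over $S$ closes the argument: $1=\sum_{i\in S}m'_i\ge\sum_{i\in S}m_i+(m'_{i^*}-m_{i^*})>\sum_{i\in S}m_i=1$, a contradiction (the last equality because $S$ is also the set of non-empty communities of the equilibrium itself, so $\sum_{i\in S}m_i=1$). Hence no partition keeping the same communities non-empty can Pareto-dominate $I$, which is the assertion that the equilibrium is Pareto optimal given the number of non-empty communities.

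The delicate points: (i) the main obstacle is that everything rests on strict monotonicity of $c^s_i$ --- separability alone is genuinely insufficient, and it is the derivative assumption of this section that supplies what is needed; without monotonicity the cancellation only produces $c^s_i(m'_i)\le c^s_i(m_i)$ and no comparison of sizes. (ii) ``No agent worse off, some strictly better off'' must be read modulo null sets, and one must check that the strictly-improved set $B$ has positive $F$-measure so that it meets some destination community and supplies the index $i^*$ that makes the sum strict. (iii) ``Given the number of non-empty groups'' must be read as fixing which communities are active on both sides (equivalently, by Assumption~\ref{small} every equilibrium and every admissible competitor has all $n$ communities non-empty, so $S=N$ throughout); if a competitor were allowed to close an active community while opening an empty one, the counting step would no longer close by itself --- and that is exactly the channel by which ``too many communities'' remains a possible source of inefficiency.
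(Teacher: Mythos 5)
Your proof is correct, but it runs a genuinely different argument from the paper's. The paper's proof picks, in each group, the agent who ``benefits best'' (minimal $c^d_i$), observes that since sizes sum to one some group must shrink under any rival partition, and argues that the best-positioned agent of a shrinking group is hurt unless he leaves --- in which case the group would have to empty out. You instead run the classical first-welfare-theorem argument in reverse: reading $p_i=c^s_i(m_i)$ as a price, you chain the Pareto inequality with the equilibrium revealed-preference inequality at the \emph{destination} community, cancel $c^d_i(x)$, and conclude from strict monotonicity of $c^s_i$ that every active community must weakly expand and at least one strictly --- contradicting $\sum_i m'_i=1$. The two arguments are logically dual (``some group shrinks'' versus ``no group can shrink''), but yours is tighter on exactly the points where the paper's sketch is loose: you never need the minimizer of $c^d_i$ over a group to be attained (an issue with a continuum of agents and possibly non-closed cells), and you avoid the unsubstantiated step ``if the best-positioned agent is willing to leave, the group should be empty.'' Your three caveats are all well taken; in particular (iii) correctly isolates the only real ambiguity in the theorem's statement --- the argument needs the \emph{set} of active communities, not merely their number, to be held fixed, since the derivation of $m'_i\ge m_i$ requires $(I')^{-1}(i)$ to have positive measure for each $i\in S$. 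Note also that the paper's own proof text says utility ``strictly decreases with the size of his group,'' which has the sign backwards given the standing assumption $\partial c_i/\partial m_i<0$; you have the direction right.
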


We need to make two comments here. First, without the separability condition the result obviously fails as it's very easy to construct prisoner's dilemma-type externalities.\footnote{Bewley (1981) is a good example here.} Second, as equilibrium exists for any number of non-empty communities the welfare of agents may change almost arbitrary with the number of active public good providers; however, it's clear that if there is a lot of providers on market and public good provision requires a fixed cost then the equilibrium with fewer communities might be better for all of the agents. 

The possibility inefficient public good provision due to excessive number of communities is one of the main motivations for studying equilibrium stability.  We will proceed by defining the weak stability notion.
 
 \begin{definition}
 An equilibrium is weakly stable if there exists $\epsilon>0$ such that no subset of $\epsilon$-ball ($B_\epsilon$) of agents can move from one group to another with a mutual benefit.\footnote{For exposition reasons we only consider bilateral stability here, our notions and results can be trivially generalized to a multilateral case}
 \end{definition}
 
 Small stability is the most natural way to describe a small deviation. The key idea here that a small and closely-related group migrates from one community to the other. Surprisingly, under the assumption above every equilibrium is stable with respect to such deviations.
 
 \begin{theorem}\label{Weak}
 Every equilibrium is weakly stable.
 \end{theorem}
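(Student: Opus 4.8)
The plan is to argue by contradiction: I would show that for a small enough radius $\epsilon$, any agent who could possibly gain from a joint relocation between a fixed pair of groups must lie in an extremely thin one-sided tube around the corresponding indifference surface, and that the mass of such agents inside any $\epsilon$-ball is only a vanishing fraction of the mass that would be needed to move costs by enough to make the relocation attractive — forcing that mass to be zero.

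First I would fix the equilibrium partition $I$ with sizes $m^{\mathrm{eq}}\in M$, and for each ordered pair of distinct groups $(i,j)$ set $h_{ij}(x):=c_i(x,m^{\mathrm{eq}})-c_j(x,m^{\mathrm{eq}})$, so that $h_{ij}\ge 0$ on $I^{-1}(j)$ with equality exactly on the indifference surface $\Sigma_{ij}=\{h_{ij}=0\}$. Suppose some ball $B_\epsilon$ admits a positive-measure set $S\subseteq B_\epsilon\cap I^{-1}(j)$ all of whose members strictly gain by relocating to group $i$; write $\delta:=F(S)>0$. Since $c_i$ does not depend on $m_{-i}$, after the move the only relevant size changes from $m_i^{\mathrm{eq}}$ to $m_i^{\mathrm{eq}}+\delta$, and profitability for $x\in S$ means $c_i(x,m_i^{\mathrm{eq}}+\delta)<c_j(x,m_j^{\mathrm{eq}}-\delta)$ — and a fortiori $<c_j(x,m_j^{\mathrm{eq}})$, so the argument covers either convention for the outside option. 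Because $\partial_{m_i}c_i$ and $\partial_{m_j}c_j$ are continuous and nonzero on $(0,1)$, they are bounded in absolute value, say by $L$, on a fixed neighborhood of $(m_i^{\mathrm{eq}},m_j^{\mathrm{eq}})$, and for $\delta$ small the perturbed sizes stay there; hence $c_i(x,m_i^{\mathrm{eq}})-L\delta\le c_i(x,m_i^{\mathrm{eq}}+\delta)<c_j(x,m_j^{\mathrm{eq}}-\delta)\le c_j(x,m_j^{\mathrm{eq}})+L\delta$, giving $0\le h_{ij}(x)<2L\delta$ for every $x\in S$. Thus $S\subseteq B_\epsilon\cap\{0\le h_{ij}<2L\delta\}$.

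The crux is then to estimate $F\big(B_\epsilon\cap\{0\le h_{ij}<2L\delta\}\big)$. On a fixed neighborhood of the compact set $\Sigma_{ij}$ the functions $c_i,c_j$ are $C^1$ and, by hypothesis, $\|\nabla h_{ij}\|=\|\partial_x c_i-\partial_x c_j\|>\mu>0$, with the gradient direction uniformly continuous there; so for $\epsilon$ below a threshold the direction of $\nabla h_{ij}$ varies by a negligible angle over any $\epsilon$-ball. Slicing $B_\epsilon$ along that fixed direction, $h_{ij}$ increases along each fiber with slope at least $\mu/2$, so the fiber meets $\{0\le h_{ij}<2L\delta\}$ in an interval of length at most $8L\delta/\mu$, while the $(k-1)$-dimensional base of the slicing has measure at most a dimensional constant times $\epsilon^{k-1}$; by Fubini the Lebesgue measure of $B_\epsilon\cap\{0\le h_{ij}<2L\delta\}$ is $O(\epsilon^{k-1}\delta)$, hence its $F$-measure is at most $C'\epsilon^{k-1}\delta$ with $C'$ depending only on $\|f\|_\infty$, $L$, $\mu$ and $k$. (If instead $B_\epsilon$ stays away from $\Sigma_{ij}$, then $h_{ij}\ge\eta>0$ on $B_\epsilon\cap I^{-1}(j)$ and the tube is empty once $\delta<\eta/(2L)$, which is automatic since $\delta\le F(B_\epsilon)\to 0$.) Therefore $\delta\le C'\epsilon^{k-1}\delta$, and since $\delta>0$ this gives $1\le C'\epsilon^{k-1}$; as $k-1\ge 1$, shrinking $\epsilon$ below a threshold taken uniform over the finitely many pairs $(i,j)$ makes $C'\epsilon^{k-1}<1$ — a contradiction. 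Hence no positive-measure profitable bilateral deviation out of an $\epsilon$-ball exists, i.e. the equilibrium is weakly stable.

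The main obstacle is the measure estimate and its uniformity: one must pin down a single neighborhood of $\bigcup_{i\ne j}\Sigma_{ij}$ on which all $c_i$ are jointly $C^1$ with a common modulus of continuity for their gradients, dispose cleanly of the case in which an $\epsilon$-ball misses every indifference surface, and make the ``$O(\delta)$ fiber length'' bound genuinely uniform in $x$ — which is exactly what the lower bound $\|\nabla h_{ij}\|>\mu$ buys us (it is precisely the failure of this bound — $h_{ij}$ locally constant — that breaks the argument for one-dimensional metric costs). It is worth stressing conceptually that the ball restriction is doing real work here: for an \emph{unrestricted} deviating set the same computation only yields $\delta\le C'\delta$, which does not force $\delta=0$, consistent with the fact that strong stability is a genuinely more demanding property.
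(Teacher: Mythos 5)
Your proof is correct and takes essentially the same route as the paper's: both rest on the observation that any profitable deviators must lie in a slab of normal thickness $O(\delta)$ around the indifference surface (using the lower bound on $\|\partial_x c_j-\partial_x c_i\|$ and the boundedness of $\partial_m c$ at equilibrium sizes), and that the mass of such a slab inside an $\epsilon$-ball is $O(\epsilon^{k-1}\delta)$, which is self-defeating for small $\epsilon$ since $k\ge 2$. Your contradiction-plus-tube-estimate formulation is just a rigorous, integrated rendition of the paper's differential computation with $d\gamma$, $dh$ and $dm_V$.
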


The intuition of this result is straightforward --- suppose that a ``diameter'' of this group is $\epsilon$. Then the worst-case per-person distance losses are of the order of $\epsilon$, while gains from scale economy are of the  $\epsilon^k$ order with $k>1$. Not surprisingly, for sufficiently small epsilon one is always greater than another. The following definition of strong stability removes the group diameter constraint, so that stability conditions become non-trivial. 
 
  \begin{definition}
 An equilibrium is strongly stable if there exists $\epsilon>0$ such that no set of agents with measure $\epsilon$ can move from one group to another to a mutual benefit.
 \end{definition}
 \begin{theorem}\label{Strong}
 If in an equilibrium for all $n-1$ dimensional measurable subsets $T_{ij}$ of group borders ($Brd_{ij}$) there exists an agent $y\in T_{ij}$ such that the following holds:
 $$
\int\limits_{T_{ij}} \frac{f(x) }{\left|\left| \frac{\partial c_j}{\partial x}-\frac{\partial c_i}{\partial x}\right|\right|} dw + \frac{1}{\frac{\partial c_j(y,m_j)}{\partial m_j}}<0.
$$
  then that equilibrium is strongly stable.
 \end{theorem}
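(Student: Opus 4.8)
{of Theorem}{\ref{Strong}}
The plan is to recast strong stability as the impossibility of a profitable small coalitional relocation between any ordered pair of groups, and then to bound the mass of agents who could gain from such a relocation by a first-order expansion that localizes at the relevant group border; the displayed inequality is exactly the bookkeeping identity that makes this mass fall short of the relocating coalition's own mass. For the reduction: by the bilateral convention it suffices to rule out, for every ordered pair $(i,j)$ and every small $\mu>0$, a set $S\subseteq I^{-1}(i)$ with $F(S)=\mu$ each of whose members strictly gains upon relocating to $j$. Relocation raises $j$'s size to $m_j+\mu$, and since $c_i$ does not depend on $m_{-i}$, an agent $x\in S$ gains iff $c_j(x,m_j+\mu)<c_i(x,m_i)$. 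Writing $B_{ij}(\mu)=\{x\in I^{-1}(i):c_j(x,m_j+\mu)<c_i(x,m_i)\}$, a profitable relocation of mass $\mu$ exists iff $F(B_{ij}(\mu))\ge\mu$ (non-atomicity of $f$ lets one extract a subset of exact mass $\mu$), so it is enough to establish $F(B_{ij}(\mu))<\mu$ for all pairs and all small $\mu$.

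Next I would localize $B_{ij}(\mu)$ at the border. In equilibrium $c_i(x,m_i)\le c_j(x,m_j)$ on $I^{-1}(i)$, so $g:=c_j(\cdot,m_j)-c_i(\cdot,m_i)\ge 0$ there, with $g$ vanishing precisely on $Brd_{ij}$ --- a null set by the hyperbola property (Assumption~\ref{indif}), and, by the hypothesized $C^1$ regularity at equilibrium indifference points together with $\|\partial_x c_j-\partial_x c_i\|>\mathrm{const}>0$, a $C^1$ hypersurface with non-degenerate normal. A uniform first-order expansion $c_j(x,m_j+\mu)=c_j(x,m_j)+\mu\,\partial_{m_j}c_j(x,m_j)+o(\mu)$ --- from continuity and uniform boundedness away from zero of $\partial_{m_j}c_j$ and compactness of $X$ --- then gives $B_{ij}(\mu)=\{x\in I^{-1}(i):0\le g(x)<\mu\,|\partial_{m_j}c_j(x,m_j)|+o(\mu)\}$, a tube around $Brd_{ij}$ whose normal width vanishes with $\mu$.

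Applying the coarea formula to $g$ then yields
\[
F(B_{ij}(\mu))=\mu\int_{Brd_{ij}}\frac{f(x)\,|\partial_{m_j}c_j(x,m_j)|}{\|\partial_x c_j(x)-\partial_x c_i(x)\|}\,dw(x)+o(\mu).
\]
Consequently a profitable relocation drawn from the slab of group $i$ abutting a border piece $T_{ij}$ would need that slab to carry at least the replacement mass required to compensate its least-willing member --- the agent on $T_{ij}$ with the smallest own-size sensitivity $|\partial_{m_j}c_j|$, which is the source of the term $1/(\partial c_j(y,m_j)/\partial m_j)$ (negative since $\partial_{m_j}c_j<0$). The hypothesis provides, for every measurable $T_{ij}\subseteq Brd_{ij}$, a witness $y\in T_{ij}$ for which $\int_{T_{ij}}f/\|\partial_x c_j-\partial_x c_i\|\,dw+1/(\partial c_j(y,m_j)/\partial m_j)<0$, i.e. every such slab falls short; ranging over all $T_{ij}$ and all ordered pairs covers every candidate coalition. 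Since $N$ is finite and each $Brd_{ij}$ is compact, the $o(\mu)$ remainders are uniform, so a single $\epsilon>0$ dominates them for all $\mu\le\epsilon$; with that $\epsilon$ no measure-$\epsilon$ relocation is profitable, which is strong stability.

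The hard part will be the uniform quantitative bookkeeping in the last two paragraphs: making the tubular-neighborhood and coarea expansions rigorous for a merely $C^1$ surface and uniform over all measurable border pieces; controlling the remainder of the $m_j$-expansion of $c_j$ uniformly in $x$; and, most delicately, reducing an arbitrary measure-$\epsilon$ coalition to a slab over a border piece and identifying the marginal agent $y$ that makes the estimate binding --- this is precisely where the ``for every $T_{ij}$ there exists $y$'' form of the hypothesis enters, and pinning the constant down sharply there is the crux.
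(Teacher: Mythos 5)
Your strategy is the same first--order, border--localization argument the paper itself uses: convert an agent's willingness to pay into a normal displacement from $Brd_{ij}$, integrate $f/\|\partial_x c_j-\partial_x c_i\|$ over a border piece to get the deviating coalition's mass, and weigh marginal distance cost against marginal scale gain. Your packaging via the benefit set $B_{ij}(\mu)$ and the coarea formula is, if anything, cleaner than the paper's differential bookkeeping with $d\gamma$, $dh$, $dm_{T_{ij}}$.

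The gap is exactly the one you flag in your last paragraph, and it is real. Your coarea estimate puts the size--sensitivity \emph{inside} the integral, so the condition your argument actually delivers is $\int_{Brd_{ij}}\bigl|\partial_{m_j}c_j(x,m_j)\bigr|\,f(x)/\|\partial_x c_j-\partial_x c_i\|\,dw<1$. That condition implies the theorem's hypothesis (for any $T_{ij}$ take $y$ minimizing $|\partial_{m_j}c_j|$ on it), but is \emph{not} implied by it. Writing $d\nu=f/\|\partial_x c_j-\partial_x c_i\|\,dw$ and $a=|\partial_{m_j}c_j|$, the stated hypothesis amounts to $(\inf_{T}a)\,\nu(T)<1$ for every $T\subseteq Brd_{ij}$ (a weak--$L^1$ bound), while your estimate requires $\int_{Brd_{ij}}a\,d\nu<1$ (a strong--$L^1$ bound): with $a=2$ on a piece of $\nu$--mass $0.4$ and $a=1$ on a piece of $\nu$--mass $0.3$, every $T$ satisfies $(\inf_T a)\nu(T)\le 0.8<1$, yet $\int a\,d\nu=1.1>1$, so $B_{ij}(\mu)$ has mass exceeding $\mu$ and a profitable coalition of mass $\mu$ exists. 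The culprit is that the most dangerous coalition is a tube whose normal depth is proportional to $|\partial_{m_j}c_j(x)|$ and hence is \emph{not} a constant--depth slab over any $T_{ij}$; so the closing claim ``ranging over all $T_{ij}$ and all $y$ covers every candidate coalition'' does not go through. For what it is worth, the paper's own proof closes this only by restricting attention to deviations in which every member accepts the same extra cost $d\gamma$ (constant--cost--depth slabs), for which the ``for all $T_{ij}$ there exists $y$'' form is exactly what falls out; if you instead insist on arbitrary measure--$\epsilon$ coalitions, as your write--up does, your integral condition is the correct sufficient (and essentially necessary) one, and the reduction to the stated hypothesis cannot be completed.
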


 \begin{remark} The scale gains $\frac{\partial c_j(y,m_j)}{\partial m}$ do depend on $y$ in the general case, so we have to consider all possible deviating sets, as it may be that some agents on the border benefit from scale economy greater than the others. However, if the cost function is separable in group sizes and distances then the theorem above becomes much simpler. Because $\frac{\partial c_j(y,m_j)}{\partial m_j}$ does not depend on $y$ we should take the largest deviating set possible --- the whole border: $T_{ij}=Brd_{ij}$.
 \end{remark}
 
 Our definitions of stability were designed to illustrate the nature of deviations that may cause snowballing migration that would result in significant changes in the equilibrium structure. The results indicate that either the deviating group should be large or it has to consist of agents with \emph{diverse} preferences. One way or another, that deviation may be hard to execute, either because of coordination problem in large groups or cause of internal conflict in small but diverse ones.  
 
The conditions for strong stability may seem rather complicated, however, they still might help to estabish a key comparative statics result of the paper. Let's suppose that the distance costs are decreasing, then the strong stability would sooner or later disappear. 
 
 \begin{theorem}\label{CompStat}
 If $\frac{\partial c_j}{\partial m}$ is uniformly bounded away from zero for all equilibrium partitions, then as $\frac{\partial c_j}{\partial x}\rightarrow 0$ any nontrivial equilibrium becomes non strongly stable, however, it remains weakly stable. On the other hand,  if $|| \frac{\partial c_j}{\partial x}-\frac{\partial c_i}{\partial x}||$ is uniformly bounded away from zero in all possible equilibrium indifference points, then as $\frac{\partial c_j}{\partial m}\rightarrow 0$ any nontrivial equilibrium becomes strongly stable.
 \end{theorem}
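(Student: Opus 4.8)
The plan is to prove the two halves by opposite strategies: to show that strong stability is \emph{lost} as $\partial c_j/\partial x\to 0$ I would exhibit a mutually profitable deviation of arbitrarily small measure, whereas to show that it is \emph{gained} as $\partial c_j/\partial m\to 0$ I would simply verify the sufficient condition of Theorem~\ref{Strong}. Throughout, the limits are read as running over a family of cost functions satisfying the standing regularity assumptions of this section --- e.g.\ scaling down the horizontal-differentiation term, resp.\ the fixed-cost term, in the separable case --- with all other assumptions held fixed along the family. The clause ``it remains weakly stable'' in the first half needs no argument: it is exactly Theorem~\ref{Weak}.

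\emph{First half.} Fix a nontrivial equilibrium; then two groups $i,j$ are non-empty, and because $X$ is convex (hence connected) and both groups carry positive $F$-mass, there is a relatively open patch $S$ of the border $Brd_{ij}$ that has positive $(n-1)$-measure and positive surrounding density. For small $w>0$ let $D_w$ be the set of agents currently in group $i$, lying over $S$, with $0\le c_j(x,m_j)-c_i(x,m_i)\le w$, and put $\mu(w)=F(D_w)$. A coarea-type estimate gives $\mu(w)=\bigl(\int_S f(x)/||\partial c_j/\partial x-\partial c_i/\partial x||\,dw\bigr)\,w+o(w)=:K_S\,w+o(w)$ with $K_S>0$. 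Now let all of $D_w$ move to $j$: group $j$ grows to $m_j+\mu(w)$, and a first-order expansion (uniform in $x$ by continuity of the derivatives on the compact domain) shows that each deviator's gain is at least $-w+\mu(w)\,\delta+o(\mu(w))$, where $\delta>0$ is the uniform lower bound on $|\partial c_j/\partial m|$; substituting $\mu(w)\ge\tfrac12 K_S w$ this is bounded below by $w\bigl(\tfrac12 K_S\delta-1+o(1)\bigr)$. Since $K_S\to\infty$ as $||\partial c_j/\partial x-\partial c_i/\partial x||\to 0$, for $\partial c_j/\partial x$ small enough this is positive for all small $w$; and since $\mu(w)\to 0$, for every $\epsilon>0$ we obtain a mutually beneficial deviation of measure $\le\epsilon$. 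Hence no $\epsilon$ works in the definition of strong stability, while weak stability persists by Theorem~\ref{Weak}.

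\emph{Second half.} Here I would verify the hypothesis of Theorem~\ref{Strong}. By the implicit function theorem applied to $c_i-c_j=0$ with non-vanishing gradient difference, each $Brd_{ij}$ is locally a Lipschitz hypersurface in the compact set $X$, so $\int_{Brd_{ij}}dw<\infty$; together with $f$ bounded and $||\partial c_j/\partial x-\partial c_i/\partial x||\ge\eta>0$ at all equilibrium indifference points this gives, for every measurable $T_{ij}\subseteq Brd_{ij}$,
\[
\int_{T_{ij}}\frac{f(x)}{\left|\left|\frac{\partial c_j}{\partial x}-\frac{\partial c_i}{\partial x}\right|\right|}\,dw\ \le\ \frac{\|f\|_\infty}{\eta}\int_{Brd_{ij}}dw\ =:\ C<\infty ,
\]
a bound independent of $T_{ij}$ and of the equilibrium. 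On the other hand $\partial c_j/\partial m_j<0$, so $1/(\partial c_j(y,m_j)/\partial m_j)<0$, and as $\partial c_j/\partial m\to 0$ (uniformly in $y$) this quantity tends to $-\infty$. Hence once $\partial c_j/\partial m$ is small enough the left-hand side of the inequality in Theorem~\ref{Strong} is negative for \emph{every} $T_{ij}$ and every $y\in T_{ij}$; in particular the ``there exists $y$'' requirement is met, and Theorem~\ref{Strong} delivers strong stability --- which in turn implies weak stability.

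\emph{Main obstacle.} I expect the first half to be the delicate part: one must make the slab construction robust to a density $f$ that may be discontinuous or even vanish along parts of a border, which is why I extract a single good patch $S$ and rely on convexity of $X$ to guarantee one exists; and one must control $\mu(w)$ from below by a linear function of $w$, for which the non-degeneracy of the relevant indifference points --- the standing assumption that group centres are never equilibrium indifference points, so that $||\partial c_j/\partial x-\partial c_i/\partial x||$ is finite and bounded away from zero along $S$ --- is precisely what does the work, and also what lets the first-order gain estimate be uniform across the deviating set. The second half is comparatively routine once one grants that the borders have finite $(n-1)$-measure, which again rests on the gradient difference not vanishing.
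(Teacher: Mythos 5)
Your proposal is correct, and its second half plus the ``remains weakly stable'' clause coincide exactly with the paper's own argument: the paper also verifies the inequality of Theorem~\ref{Strong} by noting that the border integral stays bounded (finite border measure, $f$ bounded, gradient difference bounded away from zero) while $1/\bigl(\partial c_j/\partial m_j\bigr)\to-\infty$, and it also dispatches weak stability by citing Theorem~\ref{Weak}. Where you genuinely diverge is the first half. The paper's proof is a one-liner: as $\partial c_j/\partial x\to 0$ the integral term in the Theorem~\ref{Strong} inequality blows up, so ``the condition for strong stability fails.'' Taken literally that is a logical gap, since Theorem~\ref{Strong} is stated only as a \emph{sufficient} condition for strong stability, and the failure of a sufficient condition does not by itself yield instability; the paper is implicitly treating its first-order inequality as necessary and sufficient. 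Your slab construction --- taking $D_w=\{x\in S:\,0\le c_j-c_i\le w\}$ over a positive-density patch of the border, estimating $\mu(w)\approx K_S w$ by a coarea argument, and checking that the scale gain $\delta\mu(w)$ dominates the at-most-$w$ distance loss once $K_S$ is large --- is precisely the missing converse direction: it exhibits an explicitly profitable deviation of arbitrarily small measure, which is what ``non strongly stable'' actually requires. So your route buys a complete argument at the cost of having to worry about where $f$ is positive along the border (which you handle by extracting one good patch); the paper's route is shorter but leans on the unproved necessity of its stability criterion. Both arguments share the same linearization caveats, which the paper accepts throughout this section.
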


We might consider extreme cases here: suppose that there are no distance costs, then every equilibrium is not stable as there is nothing to offset gains from the scale economy. On the other hand, if there is no scale economy effects then every equilibriums is stable, as there is nothing to gain by migrating to another group --- every agent already gets his best option. The result above establishes that situation remains the same if we are sufficiently close to these extreme situations. 

Note however, that even in the simplest example mentioned above there might be several equilibria, with some of them being stable and some not. Our numerical simulations indicate that the ``gray zone'' of parameters with mixed effects possible is rather large.

\section{Discussion}
 It's worthwhile stressing once again the key message of our research. We managed to separate to main issues with Tiebout-type models: the issue of model consistency and the issue of equilibrium efficiency. The theorem above actually closes the first issue --- the equilibrium exists under very mild and reasonable assumptions, which are more or less standard to classical economic theory. Moreover, our setup is estimation-friendly, as it incorporates into existence framework almost everything that applied researcher might want to take into account.
 
 The equilibrium efficiency is much more ambiguous as our research suggests: it might be undermined in many different ways --- from wrong number of public good providers on market to complex externalities among communities. Therefore, we would like to rephare the original Tiebout message as the ``Tiebout hypotheses:'' 
 \begin{center}
 	\emph{Free mobility improves social welfare}. 
 \end{center}
 In reality might be true or false, depending on the studied situation. And we stress that only empirical research that takes into account all the particular details can really answer that question, and that answer is necessary setting-specific.

  \section*{Appendix}
  
   \begin{proof}{of theorem}{\ref{main}}
 Take $M_e$ to be $\epsilon$-restricted ($\nicefrac{1}{n}>\epsilon>0$) simplex:
$$
M_e=\{m|m_i\geq\epsilon,\; \sum_{i=1}^n m_i=1. \}
$$
We will refer to $m\in M_e$ as \emph{nominal} group sizes. Given the nominal sizes we can form communities (partition $I:X\rightarrow N$) and measure their \emph{real} sizes:
$$
S_i(m) = \{x|x\in Arg\min\limits_{i\in N}c_i(x,m)\},\qquad f_i(m)=F(S_i(m)),\qquad (i\in N).
$$
Clearly, if $f_i=m_i$ we've found an equilibrium partition. To proceed further we need the following claims:
\begin{claim}\label{Cont}
$f(m)$ is continuous on $M_e$.
\end{claim}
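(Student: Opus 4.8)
The plan is to establish sequential continuity of $f=(f_1,\dots,f_n)$ on $M_e$, which suffices since $M_e$ is a metric space. Fix $m^0\in M_e$ and an arbitrary sequence $m^t\to m^0$ in $M_e$; I want $f_i(m^t)\to f_i(m^0)$ for each $i\in N$. Writing $S_i(m)=\{x\in X:\ c_i(x,m)\le c_j(x,m)\ \forall j\in N\}$ and $\chi_i(x,m)=\mathbf{1}_{S_i(m)}(x)$, we have $f_i(m)=\int_X\chi_i(x,m)\,f(x)\,dx$, so it is enough to prove that $\chi_i(\cdot,m^t)\to\chi_i(\cdot,m^0)$ pointwise $F$-almost everywhere and then invoke the dominated convergence theorem, with dominating function $\mathbf{1}_X$ (which is $F$-integrable since $F(X)=1<\infty$).

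The key step is to isolate the bad set and show it is negligible. Let $D=\bigcup_{i\neq j}\{x:\ c_i(x,m^0)=c_j(x,m^0)\}$; by Assumption \ref{indif} this is a finite union of $F$-null sets, so $F(D)=0$. For $F$-a.e. $x$ the family $\{c_i(x,m^0)\}_{i\in N}$ consists of finitely many real numbers (here I use that each $c_i$ is bounded above a.e.), and for such $x$ outside $D$ the minimum is attained at a unique index $i^*(x)$, i.e. $c_{i^*(x)}(x,m^0)<c_j(x,m^0)$ for all $j\neq i^*(x)$. Using continuity of each of the finitely many maps $m\mapsto c_j(x,m)$ on $M\supseteq M_e$, these strict inequalities persist along the sequence: there is $T(x)$ such that $c_{i^*(x)}(x,m^t)<c_j(x,m^t)$ for all $j\neq i^*(x)$ and all $t\ge T(x)$. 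Hence, for such $x$ and $t$, $x\in S_{i^*(x)}(m^t)$ and $x\notin S_j(m^t)$ for $j\neq i^*(x)$, so $\chi_i(x,m^t)\to\chi_i(x,m^0)=\mathbf{1}[i=i^*(x)]$ for every $i\in N$. This holds for $F$-a.e. $x$, and dominated convergence then gives $f_i(m^t)=\int_X\chi_i(x,m^t)f\,dx\to\int_X\chi_i(x,m^0)f\,dx=f_i(m^0)$, which is what we wanted.

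The one genuine subtlety, and the sole place Assumption \ref{indif} enters, is that the convergence $\chi_i(\cdot,m^t)\to\chi_i(\cdot,m^0)$ is only pointwise $F$-a.e. and never uniform: exactly on the indifference set $D$, an arbitrarily small change of the nominal sizes $m$ can flip an agent's optimal community, so $\chi_i$ is genuinely discontinuous there. The hyperbola property is precisely what makes $D$ $F$-null, restoring the $L^1$-continuity of $m\mapsto\chi_i(\cdot,m)$ that the argument needs; without it — as in the one-dimensional equal-share-tax-plus-distance example — $D$ can have positive measure and $f$ jumps, so no fixed-point argument of this type goes through. I do not expect any further obstacle: once the tie set is known to be null, the remainder is a routine dominated-convergence computation.
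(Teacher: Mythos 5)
Your proof is correct and follows essentially the same route as the paper's: almost-everywhere pointwise convergence of the community indicator functions (with the hyperbola property making the tie set $F$-null) followed by dominated convergence. You simply spell out the persistence-of-strict-inequalities step that the paper leaves implicit, which is a welcome but not substantively different elaboration.
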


\begin{proof}{of claim}{\ref{Cont}}
This result directly follows from the dominated convergence theorem. Indeed, consider a converging sequence of group sizes $\{m_l\}_{l\in\mathbb{N}}\rightarrow m$ and construct a sequence of corresponding partitions $I_l(x)$. Due to the hyperbola property assumption the sequence $I_l(x)$ of indicator functions converges almost everywhere to  the partition $I(x)$ corresponding to the limit nominal group sizes $m$. The indicator functions are trivially uniformly bounded from above, so by dominated convergence theorem we have the convergence of group sizes: 
$f_i(m_l)\rightarrow f_i(m)$. 
\end{proof}
\begin{claim}\label{Zero}
There exists $\epsilon>0$ such that 
$$
m_i\leq \epsilon \Rightarrow f_i(m)=0,\qquad (i\in N).
$$
\end{claim}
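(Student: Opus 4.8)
The plan is to read off Claim~\ref{Zero} from Assumption~\ref{small} (small-group ineffectiveness) combined with the standing upper bound on the cost functions; note that the hyperbola property, Assumption~\ref{indif}, is not needed for this step. The driving idea is this: once the nominal size $m_i$ is very small, Assumption~\ref{small} forces $c_i(x,m)$ above any prescribed level $A$ for \emph{every} $x$, whereas there is always a rival community whose cost stays below a fixed bound $B$ not depending on $m$; taking $A>B$ makes community $i$ strictly dominated for almost every agent, so the set $S_i(m)$ of agents for whom $i$ is optimal is $F$-null and $f_i(m)=F(S_i(m))=0$.

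In detail I would argue as follows. First, for $m\in M$ with $m_i$ small, route agents to a largest rival community: pick $j=j(m)\in N\setminus\{i\}$ with $m_{j(m)}=\max_{k\ne i}m_k$, so that $m_{j(m)}\ge (1-m_i)/(n-1)$ is bounded below by a positive constant. Second, from the standing assumption that each $c_k$ is (a.e.) bounded above in $x$ and in $m_{-k}$ for every fixed own-size, together with its continuity in $m$, obtain a finite $B$ with $c_{j(m)}(x,m)\le B$ for a.e.\ $x$ and all the relevant $m$. Third, apply Assumption~\ref{small} to community $i$ with $A:=B+1$, getting $m_i^0>0$ such that $m_i<m_i^0$ implies $c_i(x,m)>B+1$ for all $x\in X$. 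Finally, set $\epsilon:=\tfrac{1}{2}\min\{1/n,\min_i m_i^0\}$ — the factor $\tfrac{1}{2}$ just turns the strict inequality of Assumption~\ref{small} into the non-strict one of the claim, and the minimum over the finitely many communities makes a single $\epsilon$ work for all $i$. Then, whenever $m_i\le\epsilon$, for a.e.\ $x$ we have $c_{j(m)}(x,m)\le B<c_i(x,m)$, hence $i\notin\operatorname{Arg\,min}_{k\in N}c_k(x,m)$; therefore $S_i(m)$ is $F$-null, i.e.\ $f_i(m)=0$.

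The only genuinely delicate point — and the step I expect to be the main obstacle — is the second one: promoting ``$c_k$ is bounded above for each fixed own-size'' to a bound uniform over the whole family of size-vectors $m$ with $m_i$ small. The difficulty is that a largest rival $j(m)$ may have size close to $1$ (all remaining communities nearly empty), and a priori the cost bound could deteriorate in that regime. The way to close this is to carry out the argument inside the $\epsilon$-restricted simplex $M_e$, on which every community size lies in the fixed compact subinterval $[\epsilon,\,1-(n-1)\epsilon]\subset(0,1)$ where the standing boundedness assumption does provide an $m$-independent bound $B=B(\epsilon)$, and then to check that $\epsilon$ and $B$ can be chosen consistently (i.e.\ $\epsilon<\tfrac{1}{2}m_i^0(B(\epsilon)+1)$ for all $i$). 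That such $\epsilon$ exists rests on the model's costs not diverging even at full occupancy — which is what the standing hypotheses are meant to ensure, and which is immediate in the leading example $c_i(x,m)=\|x-x_i\|+g/m_i$, where the rival's cost never exceeds $\operatorname{diam}X+g(n-1)$, visibly independent of $\epsilon$. Modulo this uniformity issue, what remains is only the one-line pointwise cost comparison recorded above.
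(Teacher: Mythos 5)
Your proposal is correct and follows essentially the same route as the paper: use the small-group inefficiency assumption to push $c_i$ above a uniform upper bound on the cost of some guaranteed-large rival community (the paper notes that in $M_e$ at least one community has size at least $\nicefrac{1}{n}$; you take the largest rival), so that community $i$ attracts a null set of agents. You are in fact more explicit than the paper about the one delicate step — making the upper bound on the rival's cost uniform over the relevant compact range of own-sizes rather than for a single fixed $m_j$ — which the paper glosses over with its ``all the supremums and infimums above are well-defined'' remark.
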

\begin{proof}{of claim}{\ref{Zero}}
First recall that due to small group inefficiency 
$ c_i(x,m_i^j,m_{-i})\rightarrow +\infty$  as  $m_i^j\xrightarrow[j\rightarrow+\infty]{} 0$.  By taking the upper bound of costs for all agents in $X$ and all the communities: we obtain the cost that should be superseded via taking sufficiently small $\epsilon$. Denote $
M^{-i}_e=\{m|m_i\geq\epsilon,\; \sum_{i\in N_{-i}} m_i=1-\epsilon \}.
$
$$
\inf_{i\in N,\;x\in X,m_{-i}\in M^{-i}_e}(c_i(x,\epsilon,m_{-i})) >  \sup_{m\in M_e,\; x\in X}\left[\inf_{j\in N}\{(c_j(x,m))\}\right].
$$
Since for every $m\in M_e$ at least one community has size above $\nicefrac{1}{n}$ all the supremums and infimums above are well-defined due to uniform boundless assumption. Thus we are done.
\end{proof}

Now the road is short --- define the following sets:
$$
A_i=\{m\in M|f_i(m)\geq m_i\},\qquad (i\in N).
$$
Observe that all these sets are closed due to claim \ref{Cont}. Since $\sum_i f_i = \sum_i m_i = 1$ (due to regularity assumption) their union covers $M_e$. Now take any $I\subset N$ and consider face of simplex $M$ spanned by vertices from $I$: any community $j$ not in $I$ has $\epsilon$ nominal mass and zero real mass on that face due to claim \ref{Zero}. Therefore $m$ on face $I$ does not belong to any set $A_j$, $j\notin I$, so face $I$ is covered by $\cup_{i\in I}A_i$. Now observe that the intersection of $A_i$, $(i\in N)$ is the equilibrium we seek. Indeed
$$
\forall i \in N: f_i\geq m_i \text { and } \sum_i f_i = \sum_i m_i = 1 \Rightarrow f_i=m_i.
$$
But such intersection exists due to the celebrated Knaster--Kuratowski--Mazurkiewicz lemma.
\begin{lemma}
Suppose that a simplex $\Delta_m$ is covered by the closed sets $C_i$ for $i \in I=\{1,\dots,m\}$ and that for all $I_k \subset I$ the face of $\Delta_m$ that is spanned by $e_i$ for $i \in I_k$ is covered by $C_i$ for $i \in I_k$ then all the $C_i$ have a common intersection point.
\end{lemma}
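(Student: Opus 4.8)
The plan is to derive this from Sperner's lemma together with a compactness argument, the same route by which the classical Knaster--Kuratowski--Mazurkiewicz result is usually obtained. Throughout, for a point $v\in\Delta_m$ with barycentric coordinates $v=\sum_{j} \lambda_j e_j$, write $J(v)=\{j:\lambda_j>0\}$ for its \emph{carrier}, so that $v$ lies in the face of $\Delta_m$ spanned by $\{e_j:j\in J(v)\}$ and in no smaller face.

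First I would recall Sperner's lemma in the form I need: for any simplicial subdivision of $\Delta_m$ and any labeling $\ell$ of its vertices by $\{1,\dots,m\}$ that is \emph{proper}, meaning $\ell(v)\in J(v)$ for every subdivision vertex $v$, there is at least one sub-simplex of the subdivision whose $m$ vertices carry all $m$ distinct labels. Next I would manufacture such a proper labeling from the hypothesis. Fix a subdivision of mesh $\delta$; for a subdivision vertex $v$, apply the covering hypothesis with $I_k=J(v)$: the face spanned by $\{e_j:j\in J(v)\}$ is covered by $\{C_j:j\in J(v)\}$, so $v\in C_j$ for some $j\in J(v)$, and I set $\ell(v)$ equal to one such $j$. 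This $\ell$ is proper by construction, so Sperner's lemma yields a fully labeled sub-simplex with vertices $v^1,\dots,v^m$ satisfying $v^j\in C_j$ for every $j$ and $\|v^i-v^j\|\le\delta$ for all $i,j$.

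Then comes the limiting step. Choosing subdivisions with mesh $\delta_N\to 0$, I obtain points $v^1_N,\dots,v^m_N$ with $v^j_N\in C_j$ and $\operatorname{diam}\{v^1_N,\dots,v^m_N\}\le\delta_N$. Since $\Delta_m$ is compact, a subsequence of $(v^1_N)_N$ converges to some $x^\ast$; because the diameters vanish, $v^j_N\to x^\ast$ for every $j$ as well. Each $C_j$ is closed and contains the corresponding $v^j_N$, hence $x^\ast\in C_j$ for all $j$, i.e. $x^\ast\in\bigcap_{i\in I}C_i$, which is the assertion. (As an alternative one can argue by contradiction through Brouwer's theorem: if the intersection were empty, then $g(x)=\big(\sum_i \operatorname{dist}(x,C_i)\big)^{-1}\sum_i \operatorname{dist}(x,C_i)\,e_i$ is a well-defined continuous self-map of $\Delta_m$; a fixed point $x^\ast$ has carrier $J(x^\ast)=\{i:x^\ast\notin C_i\}$, yet the covering hypothesis on the face spanned by $\{e_i:i\in J(x^\ast)\}$ forces $x^\ast\in C_i$ for some $i\in J(x^\ast)$, a contradiction.)

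The only real content, and hence the main obstacle, is Sperner's lemma itself; if it is taken as known, every remaining step above is routine. Proving it from scratch requires the combinatorial parity argument — counting, inside each sub-simplex, the facets labeled $\{1,\dots,m-1\}$, showing by induction on dimension that the number of such facets lying on the boundary of $\Delta_m$ is odd, and concluding that the number of fully labeled sub-simplices is odd, in particular nonzero. A minor secondary point to get right is that the labeling rule is genuinely proper on boundary faces: this is exactly, and only, where the face-covering condition of the lemma is used, so it is worth isolating that verification cleanly.
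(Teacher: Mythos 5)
Your proof is correct. Note, however, that the paper does not actually prove this lemma at all: it is the classical Knaster--Kuratowski--Mazurkiewicz lemma, invoked as a known black box at the end of the proof of Theorem \ref{main} (and, in the extended model, replaced by Freund's (1986) covering lemma for simplotopes, likewise cited rather than proved). So your argument supplies a derivation where the paper only gives a citation. The route you take --- a proper Sperner labeling obtained by assigning to each subdivision vertex $v$ an index $j$ in its carrier $J(v)$ with $v\in C_j$ (this is exactly where the face-covering hypothesis enters, as you correctly isolate), then extracting a fully labeled cell of vanishing mesh and passing to a limit using closedness of the $C_i$ --- is the standard textbook proof and is sound; your parenthetical alternative via Brouwer's fixed point applied to $g(x)\propto\sum_i \operatorname{dist}(x,C_i)\,e_i$ is also correct, since at a fixed point the carrier consists precisely of the indices $i$ with $x^\ast\notin C_i$, contradicting the covering of that face. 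The only caveat is the one you flag yourself: the combinatorial content is entirely in Sperner's lemma (or Brouwer), which you assume rather than prove; given that the paper itself treats KKM as known, this is an acceptable level of granularity, and nothing in your reduction has a gap.
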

 \end{proof}
  
   \begin{proof}{of theorem}{\ref{Weak}}
Take an equilibrium partition and then pick an agent $x^0\in X$ such that she is indifferent between joining groups $i$ and $j$, i.e.: $c_i(x^0,m_i)=c_j(x^0,m_j)$. Due to our assumptions the sets containing such agent are the only candidates for profitable deviation; if some compact set contains only agents that strictly prefer one group over the other, then we can find such small $\epsilon$ that every $\epsilon$-ball subset of the that set would be still strictly better of staying in original group then joining any other (possible gains are proportional to $dm_i\sim\epsilon^k$).   

Note that in our setup the gains of a deviating group originate from ``scale economy'' and depend on group size $m$, while the losses are always due to ``distance costs'' and depend on location $x$. Otherwise it is not an equilibrium partition: a single deviating point does not change group sizes --- so no ``distance gains.''

 Now assume that each agent in some small group $V$ is ready to take an additional cost $d\gamma$ in attempt to 
move from group $i$ to another group $j$. We can easily find the distance $dx$ from indifferent agent $x$ that implies such cost:
$$
d\gamma=\left(\frac{\partial c_j}{\partial x}-\frac{\partial c_i}{\partial x}\right)dx
$$ 
Note that the border between groups $i$ and $j$ is given by the equation 
$$
c_i(x,m_i)=c_j(x,m_j)\Leftrightarrow c_j(x,m_j)-c_i(x,m_i)=0,
$$
so vector $\nicefrac{\partial c_j}{\partial x}-\nicefrac{\partial c_i}{\partial x}$ is normal to the border.  To find the most distant agent that pays cost no more than $d\gamma$ we take $dx$ proportional to $\nicefrac{\partial c_j}{\partial x}-\nicefrac{\partial c_i}{\partial x}$ and denote $dh=||dx||$.

Now we need to find a subset $V$ of an $\epsilon$-ball around $x_0$ such that: (a) every agent is in group $i$, (b) each agent suffers costs no more than $d\gamma$. Let $f^0$ denote the density in $x_0$ and $W_{\epsilon}$ --- $(k-1)$-dimensional ball containing $x_0$, subset of $B_{\epsilon}$. Then, by evaluating the size of $V$ from above (up to a linearisation in $dx$) we have:  
$$
dm_V=Vol(W_{\epsilon})*f^0*dh.
$$
For the deviation to be non-profitable the costs of the most hurt agent should exceed his benefits, i.e.:
$$
\left|\left|\frac{\partial c_j}{\partial x}-\frac{\partial c_i}{\partial x}\right|\right| dh>-\frac{\partial c_j}{\partial m}dm_V \Leftrightarrow \left|\left|\frac{\partial c_j}{\partial x}-\frac{\partial c_i}{\partial x}\right|\right|>-f^0\frac{\partial c_j}{\partial m}Vol(W_{\epsilon})
$$
Due to our assumptions $\||\nicefrac{\partial c_j}{\partial x}-\nicefrac{\partial c_i}{\partial x}||$ is bounded away from 0, $f^0$ is just bounded, $W_\epsilon$ can be taken arbitrary small, and the only thing that remains is to show that $\frac{\partial c_j}{\partial m}$ is bounded for all \emph{equilibrium} partitions\footnote{Due to small group inefficiency assumption it is clearly unbounded if we consider all the feasible partitions}. 

To prove it note that as we have stated in the theorem \ref{main} proof each group has a technically minimum equilibrium size and this size is non-zero (Claim 2). By our assumption the cost function is smooth, so its derivative is continuous and it is bounded on any compact subset of $M$ (recall that $M$ itself is an open simplex). 
 \end{proof}
  
   \begin{proof}{of theorem}{\ref{Strong}}
 The result can be obtained by integrating the key inequality of theorem \ref{Weak} proof. Exactly, assume that the deviating agents agree to to take an additional cost $d\gamma$. Then for each agent $x$ on the border of groups $i$ and $j$ we can convert this cost into distance from border $dh$:  
 $$
d\gamma=\left|\left|\frac{\partial c_j}{\partial x}-\frac{\partial c_i}{\partial x}\right|\right|dh
$$
 Now we can take a border subset $T_{ij}\subset Brd_{ij}$ and obtain the corresponding size of a deviating group:
 $$
dm_{T_{ij}}=d\gamma \int\limits_{T_{ij}} \cfrac{f(x)dw}{\left|\left|\frac{\partial c_j}{\partial x}-\frac{\partial c_i}{\partial x}\right|\right|}.
$$
 For the deviation to be non-profitable there should be at least one agent $y$ in a deviating group who doesn't like the idea, i.e. his distance costs are not compensated by scale gains:
 $$
 d\gamma>-\frac{\partial c_j(y,m_j)}{\partial m}dm_{T_{ij}}=-\frac{\partial c_j(y,m_j)}{\partial m}d\gamma \int\limits_{T_{ij}} \cfrac{f(x)dw}{\left|\left|\frac{\partial c_j}{\partial x}-\frac{\partial c_i}{\partial x}\right|\right|}.
 $$
 After simplification we obtain the needed expression:
  $$
\int\limits_{T_{ij}} \frac{f(x) }{\left|\left| \frac{\partial c_j}{\partial x}-\frac{\partial c_i}{\partial x}\right|\right|} dw + \frac{1}{\frac{\partial c_j(y,m_j)}{\partial m_j}}<0.
$$
Thus we are done.
 \end{proof}
  
   \begin{proof}{of theorem}{\ref{CompStat}}
 The second part of the theorem's first claim is a direct implication of theorem \ref{Weak}. The other claims obviously follow from theorem \ref{Strong}, indeed, the conditions for strong stability are:
  $$
\int\limits_{T_{ij}} \frac{f(x) }{\left|\left| \frac{\partial c_j}{\partial x}-\frac{\partial c_i}{\partial x}\right|\right|} dw + \frac{1}{\frac{\partial c_j(y,m_j)}{\partial m_j}}<0.
$$
In both cases one term goes to infinity while the other remains bounded from above. Note that due to our assumptions (existence of derivatives) the measure of any border remains finite.
 \end{proof}
  
   \begin{proof}{of theorem}{\ref{Pareto}}
 The proof is very straightforward: take one agent from each group that benefits best (i.e. has the lowest $c^d$ component value). Take any other partition --- we claim that at least one of them is worse off if group sizes change. Indeed, since the utility of an agent strictly decreases with the size of his group (small group inefficiency plus derivative is bounded away from zero) and the sum of group sizes remains constant with any other group sizes at least one of them would be worse off should their group assignment remain the same. But the latter is always the case --- if the agent who is best positioned for some group is willing to leave it then this group should be empty. We can finish the proof by pointing out that all the agents make optimal decisions given group sizes --- therefore the utility of other agents cannot be improved without changing the sizes.
 \end{proof}

 \begin{proof}{of theorem}{\ref{mainmain}}
The proof follows the lines of Theorem's \ref{main} proof. The only difference is that we have to take a more general version of KKM lemma, since we no longer have one simplex but a product of simplexes (a $n$-dimensional cube is a product of $n$ 1-dimensional simplexes). Indeed, take $\epsilon>0$, constrained group size simplex $M_e$ and some nominal sizes, characteristics and parameters $(m,v,z)\in M_e\times V\times Z$, $z\in\bar Z(m,v)$. With the help of the variables above we can construct a partition $I[m,v,z](x)$ such that every agent makes optimal choice given $(m,v,z)$. Now we calculate real values of group sizes and characteristics:
\begin{multline}\nonumber
S^j_i(m,v,z) =\{x|I[m,v,z](x)=i\}= \{x|x\in Arg\min\limits_{i\in N}c^j_i(x,m,v,z)\},\\
 f_{ij}^m(m,v,z)=F_i(S^j_i(m,v,z)), \qquad
 f_{il}^v(m,v,z)=v^i_l(I[m,v,z],z). $$
\end{multline}
Next we calculate providers' best responses to $f_{ij}^m(m,v,z)$ and $f_{il}^v(m,v,z)$:
$$
f_{i}^r(m,v,z)=Arg\max_{\bar z^i \in \bar Z_i(f^m, f^v)} u(f^m, f^v,\bar z^i,z^{-i})
$$

\begin{claim}\label{ContCont}
$f^m(m,v,z)$ and $f^v(m,v,z)$ are continuous on $M_e\times V\times Z$, $f^r(m,v,z)$ has close graph and is convex-valued.
\end{claim}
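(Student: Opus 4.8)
The plan is to reproduce the proof of Claim~\ref{Cont} almost verbatim for the two continuity assertions, replacing the simplex $M_e$ by the compact product $M_e\times V\times Z$, and then to dispatch the provider map with Berge's maximum theorem. Note first that $M_e$, $V$ and $Z$ are all compact (Assumptions~\ref{structure1} and \ref{structure2}), so every correspondence below takes values in a fixed compact set and ``closed graph'' is interchangeable with upper hemicontinuity throughout.

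\emph{Continuity of $f^m$ and $f^v$.} Fix $(m,v,z)$ and a sequence $(m_l,v_l,z_l)\to (m,v,z)$ in $M_e\times V\times Z$. For every type $j$ and every location $x$ at which $i\mapsto c^j_i(x,m,v,z)$ has a \emph{unique} minimiser, continuity of each $c^j_i$ in $(m,v,z)$ forces that same index to be the unique minimiser of $c^j_i(x,m_l,v_l,z_l)$ for all large $l$; hence the partition indicators $\mathbf{1}[I[m_l,v_l,z_l](x)=i]$ converge pointwise to $\mathbf{1}[I[m,v,z](x)=i]$ at every such $x$. By Assumption~\ref{indif-general} the set of $x$ where the limiting minimiser is not unique has $F$-measure zero, so this convergence is $F$-a.e.; the indicators are bounded by $1$, which is integrable since the population has finite total mass, and dominated convergence gives $f^m_{ij}(m_l,v_l,z_l)\to f^m_{ij}(m,v,z)$. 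Applying the same reasoning to $\mathbf{1}[I[m_l,v_l,z_l](x)\neq I[m,v,z](x)]$ shows that the partitions converge in the symmetric-difference measure metric; together with $z_l\to z$ and the assumed joint continuity of each $v^i_l$ in the partition (for that metric) and in $z$, this yields $f^v_{il}(m_l,v_l,z_l)\to f^v_{il}(m,v,z)$.

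\emph{The provider correspondence.} For fixed $(m,v,z)$ the feasible set is $\bar Z_i\big(f^m(m,v,z),f^v(m,v,z)\big)$, a nonempty compact convex subset of $Z_i$ by Assumption~\ref{structure2}, and the objective $u_i(f^m,f^v,\cdot,z^{-i})$ is continuous and quasi-concave in its active slot; hence $f^r_i(m,v,z)$ is nonempty and convex, so the product map $f^r$ is nonempty- and convex-valued (nonemptiness being exactly what the subsequent Kakutani step needs). For the closed graph I would invoke Berge's maximum theorem: the objective is continuous jointly in $(m,v,z,\bar z^i)$, being the composite of the continuous $u_i$ with the maps $f^m,f^v$ from the previous step, and the constraint correspondence $(m,v,z)\mapsto \bar Z_i\big(f^m(m,v,z),f^v(m,v,z)\big)$ is continuous --- upper hemicontinuity follows from the closed graph of $\bar Z_i$ in $(m,v)$ of Assumption~\ref{structure2} composed with the continuous $f^m,f^v$ into the compact $Z_i$, and lower hemicontinuity is the point that must be checked. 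Berge then gives that $f^r_i$ is upper hemicontinuous, which is a closed graph since $Z_i$ is compact.

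\emph{Where the work is.} The single load-bearing step is the $F$-almost-everywhere convergence of the partition indicators: along the sequence the minimising community can oscillate precisely on the moving set of near-indifferent agents, and Assumption~\ref{indif-general} is exactly what makes the limiting bad set null so that dominated convergence applies --- this is the role the hyperbola property already played in Claim~\ref{Cont}, and the genuine content here is only that it survives the extra coordinates $v$ and $z$, which enter solely through the continuous dependence of $c^j_i$. A secondary point is the lower hemicontinuity of the provider constraint correspondence needed to run Berge; if one is unwilling to read ``closed graph of $\bar Z_i$'' in Assumption~\ref{structure2} as full continuity, one can instead establish the closed graph of $f^r_i$ directly --- taking $\zeta_l\in f^r_i(m_l,v_l,z_l)$ with $\zeta_l\to\zeta$, using the closed graph of $\bar Z_i$ and continuity of $f^m,f^v$ to place $\zeta$ in the limiting feasible set, and then passing to the limit in the optimality inequality --- where that last passage is exactly the step requiring feasible points along the sequence to approximate an arbitrary limiting-feasible parameter.
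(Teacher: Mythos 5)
Your proposal is correct and matches the paper's intended argument, which is stated there only as a one-line sketch: the continuity of $f^m$ and $f^v$ is a direct generalisation of Claim~\ref{Cont} (a.e.\ convergence of the partition indicators via Assumption~\ref{indif-general} plus dominated convergence, with $v$ and $z$ entering only through the joint continuity of $c^j_i$), and the provider map is handled by the standard Berge/Kakutani lemma from Nash existence theory. The lower-hemicontinuity worry you flag is real but is a gap in the paper's Assumption~\ref{structure2} (closed graph of $\bar Z_i$ alone does not suffice for Berge), not in your argument.
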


 \begin{proof}{of claim}{\ref{ContCont}}
The first part of the proof is a direct generalisation of claim's \ref{Cont} proof, the second one is standard lemma used to prove Nash equilibrium existence in game theory. 
 \end{proof}

\begin{claim}\label{ZeroZero}
There exists $\epsilon>0$ such that $\forall m_{-i},v,z$
$$
m_i\leq \epsilon \Rightarrow f^m_i(m,v,z)=0,\qquad (i\in N).
$$
\end{claim}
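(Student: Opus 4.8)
The plan is to carry over the proof of Claim~\ref{Zero} essentially word for word; the only novelty is that the cost functions now also depend on the extra arguments $v$ and $z$, which range over the compact sets $V$ and $Z$, so the same sup/inf estimates still make sense. The underlying idea is unchanged: a community with a tiny nominal size is ruinously expensive by small-group inefficiency, while at least one community --- any whose nominal size is at least $\nicefrac{1}{n}$ --- has uniformly bounded cost; hence no agent of any type ever picks the tiny community, and its real size vanishes.

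Concretely, I would first fix a uniform upper bound on the cost an agent actually pays. For any $(m,v,z)\in M_e\times V\times Z$ some community $j_0=j_0(m)$ has $m_{j_0}\ge\nicefrac{1}{n}$, and since $c^j_i$ is uniformly bounded above in $x$, $m_{-i}$, $v$, $z$ for each fixed $m_i$ and is continuous in $m$, it is uniformly bounded above on the compact range $m_{j_0}\in[\nicefrac{1}{n},1]$; call the resulting bound $B<\infty$. Then $\min_{i\in N}c^j_i(x,m,v,z)\le c^j_{j_0}(x,m,v,z)\le B$ for every type $j$, every $x\in X$ and all such $(m,v,z)$. Applying Assumption~\ref{small-general} in its uniform form (uniform in $x$, $m_{-i}$, $v$, $z$) with $A=B$, for each $i\in N$ there is $\epsilon_i>0$ such that $m_i\le\epsilon_i$ forces $c^j_i(x,m,v,z)>B$ for all $j$, all $x$ and all $m_{-i},v,z$. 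Setting $\epsilon=\min_{i\in N}\epsilon_i$ (positive, since $N$ is finite) and shrinking it below $\nicefrac{1}{n}$, whenever $m_i\le\epsilon$ every agent of every type strictly prefers $j_0$ to $i$, so $i\notin Arg\min_{l\in N}c^j_l(x,m,v,z)$ for any $x$ and $j$; hence $S^j_i(m,v,z)=\emptyset$ for every type $j$ and therefore $f^m_i(m,v,z)=0$.

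The one point requiring care --- and the only genuine difference from the basic model --- is that Assumption~\ref{small-general} is stated as a bare limit, whereas the argument above needs it uniformly in $x$, $m_{-i}$, $v$ and $z$, exactly as Assumption~\ref{small} was phrased in Section~2; I would therefore either read it that way or deduce the uniform version from compactness of $X$, $V$, $Z$ together with the continuity and boundedness already assumed. Once that uniformity is in hand there is no obstacle: the finiteness of all the infima and suprema involved is guaranteed, just as in Claim~\ref{Zero}, by the observation that on $M_e$ some coordinate of $m$ is always at least $\nicefrac{1}{n}$, so the uniform-boundedness hypothesis applies on a compact set of values of $m_i$.
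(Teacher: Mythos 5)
Your proposal is correct and follows essentially the same route as the paper, whose proof of this claim simply states that it ``directly repeats'' the proof of Claim~\ref{Zero}: bound the minimal cost using the community with $m_{j_0}\ge\nicefrac{1}{n}$ and the uniform boundedness assumption, then invoke small-group inefficiency to make any community with $m_i\le\epsilon$ strictly worse for every agent of every type. Your observation that Assumption~\ref{small-general} is stated only as a pointwise limit and must be read (or upgraded via compactness of $X$, $V$, $Z$) in the uniform form of Assumption~\ref{small} is a point the paper glosses over, and handling it explicitly is a welcome refinement rather than a deviation.
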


\begin{proof}{of claim}{\ref{ZeroZero}}
The proof directly repeats claim's \ref{Zero} proof.
\end{proof}

Now we need to modify the construction of theorem's \ref{main} proof to make use of KKM product lemma.  We will use dual covering lemma from Freund (1986) (covering lemma 2). Let $T^{\alpha}$ be a standard $\alpha$-dimensional simplex, $T=T^{\alpha_1}\times...\times T^{\alpha_{\beta}}$ be a simplitope, i.e. product of $\beta$ simplexes. Let $\Gamma= \{(j,k)|j\in\{1..\beta\},k\in\{1..\alpha_j\} \}$ be a set of indexes. We have the following lemma:
  \begin{lemma}\label{PKKM}
Let $T^{jk}$, $(j,k)\in \Gamma$, be a family of closed sets such that
$\cup_{(j,k)\in N}T^{jk} = S$, and $T^{jk}\supset \{t \in T | t^j_k= 0\}$ for each $(j,k)\in \Gamma$. Then there exists $j \in \{1, . .., \alpha \}$ such that $\cap_{k=1..m_j} T^{jk}\neq\emptyset$.
\end{lemma}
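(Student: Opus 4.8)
The plan is to prove Lemma~\ref{PKKM} the same way the classical closed-covering (KKM) lemma is deduced from Sperner's lemma, but carried out on a product of simplices, following Freund (1986): reduce the topological statement to a purely combinatorial labelling statement on fine triangulations of the simplotope $T=T^{\alpha_1}\times\cdots\times T^{\alpha_\beta}$, and then invoke the simplotope analogue of (the dual of) Sperner's lemma. First I would fix a sequence of triangulations $\mathcal T_m$ of $T$ with $\operatorname{mesh}(\mathcal T_m)\to 0$ and assign to every vertex $v$ of $\mathcal T_m$ a label $\ell(v)=(j(v),k(v))\in\Gamma$ such that $v\in T^{j(v)k(v)}$; such a label exists because the sets $T^{jk}$ cover $T$. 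The hypothesis $T^{jk}\supseteq\{t\in T\mid t^{j}_{k}=0\}$ is exactly what makes this labelling \emph{admissible} for the dual Sperner-type lemma: a vertex lying on the facet $\{t^{j}_{k}=0\}$ of $T$ is automatically in $T^{jk}$ and so may legitimately receive the label $(j,k)$, which is the required boundary condition.

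Next I would apply the combinatorial simplotope lemma (Freund's covering lemma~2, in its combinatorial form) to the labelling of each $\mathcal T_m$. It yields, for every $m$, a cell $\sigma_m\in\mathcal T_m$ and an index $j_m\in\{1,\dots,\beta\}$ such that all of the labels $(j_m,1),\dots,(j_m,n_{j_m})$ occur among the vertices of $\sigma_m$, where $n_{j}=\alpha_{j}+1$ is the number of vertices of the $j$-th factor simplex. Since there are finitely many factors, I pass to a subsequence along which $j_m$ equals a fixed $j^{*}$; because $\operatorname{mesh}(\mathcal T_m)\to 0$ the cells $\sigma_m$ shrink to a single point $t^{*}\in T$, and since each $T^{j^{*}k}$ is closed and contains a vertex of $\sigma_m$ for every $m$, we conclude $t^{*}\in\bigcap_{k=1}^{n_{j^{*}}}T^{j^{*}k}$. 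That is exactly the assertion of the lemma, and it also pinpoints where the existential ``there exists $j$'' comes from: the combinatorial lemma only guarantees a cell that is completely labelled \emph{within a single factor}, not simultaneously in all of them.

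The real work, and the step I expect to be the main obstacle, is the combinatorial simplotope lemma itself: that every admissible labelling of a triangulation of $T^{\alpha_1}\times\cdots\times T^{\alpha_\beta}$ has a cell carrying all $n_{j}$ labels of some one factor $j$. On a single simplex this is the familiar parity / path-following proof of (the dual of) Sperner's lemma. On a product one has to pin down a canonical triangulation of the simplotope (the ``staircase'' triangulation is the standard choice), define the right notion of an almost-completely-labelled cell, and run a room-to-room path-following or degree argument, verifying that the only boundary facets of $T$ through which such a path can exit are those that force complete labelling within one factor — this is precisely the bookkeeping carried out in Freund (1986). In a fully self-contained treatment I would reproduce that parity argument on the staircase-triangulated simplotope; what does \emph{not} work is the naive attempt to run the usual single-map Brouwer proof of KKM one factor at a time, which breaks down exactly because of the product structure. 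The remaining ingredients — compactness of $T$, the limiting argument, and the treatment of lower-dimensional faces of $T$ via the admissibility condition — are routine.
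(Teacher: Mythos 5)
The paper does not actually prove this lemma: it is quoted verbatim from Freund (1986) (his ``covering lemma~2''), with the remark that the single-factor case is Scarf's dual KKM lemma. Your proposal is therefore doing strictly more than the paper does, and the route you take is in fact the route Freund himself takes: reduce the covering statement to a combinatorial completely-labelled-cell lemma on a triangulated simplotope, and recover the topological statement by letting the mesh go to zero and using closedness. Your reduction is sound. In particular you correctly identify the two points where the hypotheses enter: (i) the containment $T^{jk}\supseteq\{t\in T\mid t^{j}_{k}=0\}$ is what makes an \emph{admissible} labelling possible --- the combinatorial lemma requires a boundary vertex to be labelled by one of its \emph{vanishing} coordinates $(j,k)$, and the containment guarantees that such a label is still consistent with the rule $v\in T^{\ell(v)}$ (the mere covering property would not suffice for this, since the covering set containing $v$ need not correspond to a vanishing coordinate); and (ii) the conclusion is only ``there exists $j$'' because the combinatorial lemma yields a cell completely labelled \emph{within one factor}, not across all factors --- a cell of a triangulation of the $\bigl(\sum_j\alpha_j\bigr)$-dimensional simplotope simply does not have enough vertices to carry all labels simultaneously. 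The limiting argument (pass to a subsequence with a fixed factor $j^{*}$, shrink the cells to a point, use closedness of each $T^{j^{*}k}$) is routine and correct.

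The one thing your write-up does not supply is the combinatorial core itself --- the existence of a cell carrying all labels of some one factor for every admissible labelling of a (staircase-)triangulated simplotope. You flag this honestly as the main obstacle and point to the right source; since the paper black-boxes the \emph{entire} lemma by citation, this is not a gap relative to the paper, but it is the only place where your argument is not self-contained. If you do write out that step, the path-following/parity argument must be run on a triangulation that restricts to a triangulation of each face of the simplotope, and the bookkeeping that a path can only exit through a facet forcing complete labelling in one factor is exactly the content of Freund's proof. A minor notational caution: the paper's index set $\Gamma$ runs $k\in\{1,\dots,\alpha_j\}$ while an $\alpha_j$-dimensional simplex has $\alpha_j+1$ facets of the form $\{t^{j}_{k}=0\}$; your count $n_j=\alpha_j+1$ is the geometrically correct one, and the discrepancy is an off-by-one in the paper's transcription of Freund's statement, not an error in your argument.
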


If there is just one simplex it is just Scarf's dual KKM lemma. 

In our case simplex $T$ would be a product of $M_e$, $V$ and $Z$, thus we have a product of $n-1$ dimensional simplex and $r+l$ 1 dimensional simplexes (recall that $V$ and $Z$ are cubes). 
Now we are ready to define the set that cover $T=M_e\times V \times Z$. In order to maintain standard simplicial  structure let's reindex the variables in the following way: $t=(m,v,z)$, $t_i^1=m_i$, $t_1^j=v^i_p$, $t_2^j=1-v^i_p$, $t_1^j=z^i_q$, $t_2^j=1-z^i_q$; $j=2..1+l+r$. Here $j$ changes along with $i$, $p$ and $q$ to maintain continuous numbering. Denote by $\Gamma$ the set of all such indexes. Slightly abusing notation we will use $f$ correspondence as it was defined on $T$ using appropriate indexation. We define $f_2^j(t)=1-f_1^j(t)$. Denote by $\overline{f}_k^j(t) = \min {f}_k^j(t)$ (recall that $f$ is a correspondence in some variables). Now we a ready to define the covering sets:
\begin{multline}\nonumber
A_k^{j} = \{t\in T | \overline{f}_k^j(t) -t_k^j\geq \overline{f}^p_q(t) -t^p_q,\;\forall (p,q)\in \Gamma  \}
\cup \\
\{t\in T | t_k^j=0,\text{ if } j=2..1+l+r\}\cup \{t\in T | t_k^j=\epsilon,\text{ if } j=1\}.
\end{multline}

Due to regularity assumption their union covers $M$.  Observe that all these sets are closed due to claim \ref{ContCont}.

Thus due to lemma \ref{PKKM} there exists $j \in \{1, . .., \alpha \}$ such that $\cap_{k=1..m_j} T^{jk}\neq\emptyset$ --- other assumptions are satisfied by construction. We claim that $t\in \cap_{k=1..m_j} T^{jk}$ is the equilibrium we seek. Indeed, due to continuity (closeness) and convexity we obtain that $t\in f(t)$, moreover, this point cannot lie on the boundary of first simplex (group sizes) due to lemma \ref {ZeroZero} --- the argumentation for it is the same as in theorem's \ref{main} proof.  
 \end{proof}

\end{document}